\documentclass{amsart}

\makeatletter
\def\subsection{\@startsection{subsection}{2}
  \z@{.5\linespacing\@plus.7\linespacing}{.5\linespacing}
  {\normalfont\bfseries}}
\makeatother

\makeatletter
\def\@defaultbiblabelstyle#1{[#1]}
\makeatother

\makeatletter
\def\@setauthors{
  \begingroup
  \def\thanks{\protect\thanks@warning}
  \trivlist
  \centering\footnotesize \@topsep30\p@\relax
  \advance\@topsep by -\baselineskip
  \item\relax
  \author@andify\authors
  \def\\{\protect\linebreak}
  \authors
  \ifx\@empty\contribs
  \else
    ,\penalty-3 \space \@setcontribs
    \@closetoccontribs
  \fi
  \endtrivlist
  \endgroup
}
\def\@settitle{\begin{center}
  \baselineskip14\p@\relax
    \bfseries
  \@title
  \end{center}
}
\makeatother

\usepackage{amssymb}
\usepackage{amsxtra}
\usepackage{mathtools}
\usepackage[colorlinks=true, citecolor=blue]{hyperref}
\usepackage{upref}
\usepackage{soul}

\newtheorem{theorem}{Theorem}[section]
\newtheorem{lemma}[theorem]{Lemma}
\newtheorem{proposition}[theorem]{Proposition}

\newtheorem{conjecture}[theorem]{Conjecture}
\newtheorem{question}[theorem]{Question}
\newtheorem{problem}[theorem]{Problem}

\theoremstyle{definition}
\newtheorem{definition}[theorem]{Definition}
\newtheorem{example}[theorem]{Example}

\theoremstyle{remark}

\numberwithin{equation}{section}
\allowdisplaybreaks[4]

\begin{document}

\title{Complete Simplicial Fans, Stanley--Reisner Rings, and equivariant h-polynomial}

\author{Tao Gui}
\address{(Tao Gui) \newline \indent Beijing International Center for Mathematical Research, Peking University, No.\ 5 Yiheyuan Rd, Haidian District, Beijing 100871, China}
\email{guitao18(at)mails(dot)ucas(dot)ac(dot)cn}

\subjclass[2020]{Primary 13A50; Secondary 14M25, 20F55, 52B05, 52B15}

\keywords{Cohomology of toric variety, complete simplicial fans, Stanley--Reisner rings, equivariant h-polynomials, reflection group actions}

\begin{abstract}
We derive a graded character formula for the action of any finite group on the Artinian reduction of the Stanley--Reisner ring of any complete simplicial fan, which is given by an equivariant version of the classical h-polynomial. This gives the graded character formula for the representation of the group on the cohomology of the associated toric variety when the fan is rational. 
As an application, we use a navel tool, which we called hybrid fan, to compute the Poincar\'e polynomial of the invariants of the Artinian reduction of the Stanley--Reisner ring of any complete simplicial fan under a finite reflection group action. This implies that the Poincar\'e polynomial of the quotient of a compact toric orbifold by any finite reflection group is equal to the Poincar\'e polynomial of the compact toric orbifold associated to the hybrid fan.
\end{abstract}

\maketitle

\setcounter{tocdepth}{2}

\section{Introduction}
A fundamental theorem in toric geometry states that there is a one-to-one correspondence between (normal) toric varieties and fans of strongly convex rational (with respect to some lattice) polyhedral cones\cite{davis1991convex}. This correspondence gives a rich dictionary between the geometry of a toric variety and the combinatorics of its associated fan. For example, the toric variety is an orbifold (that is, it has only finite quotient singularity) if and only if the associated fan is simplicial, and the toric variety is compact (in the classical topology) if and only if the associated fan is complete\cite[Theorem 3.1.19]{cox2024toric}. Furthermore, the toric variety is projective if and only if the associated fan is polytopal (that is, it is the normal fan of some rational convex polytope)\cite[Theorem 6.2.1]{cox2024toric}. 

If a finite group $W$ acts on a polytope or a fan preserving the defining lattice, then this action induces an action of $W$ on the associated toric variety, and hence induces a graded representation of $W$ on its cohomology. For the case of a Weyl group $W$ acting on the standard $W$-permutohedron and the normal fan consisting of Weyl chambers, this representation has been extensively studied by Procesi\cite{Procesi90}, by Dolgachev--Lunts\cite{dolgachev1994character}, by Stembridge\cite{stembridge1994some}, and by Lehrer\cite{lehrer2008rational}. Each of these papers provides a different perspective.

However, polytopes and fans can be non-rational with respect to any lattice\footnote{In fact, there exist combinatorial types of polytopes which do not come from any rational polytope, see for example, \cite{ziegler2008nonrational}.}. In such cases, the associated toric varieties does not exist. Nevertheless, there is always a finite poset, the \emph{face poset} of the polytope or fan, which encodes the torus orbit stratification when there is a toric variety, known as the orbit-cone correspondence in toric geometry\cite[Theorem 3.2.6]{cox2024toric}. If a group acts on a polytope or fan, then it acts on the face poset preserving the partial order given by inclusion of faces.

Note that the Betti numbers and hence the usual cohomology of a general toric variety are not invariants of the combinatorics of the associated fan in general\cite{mcconnell1989rational}. However, for an important class of toric variaties---the compact toric orbifolds associated with \emph{rational complete simplicial fans}, the usual cohomology (at least as a graded vector space) of the toric variety depends only on the face poset of the fan\cite[Theorem 12.4.1 and Corollary 12.4.8]{cox2024toric}. This fact can be derived from the localization theorem of equivariant cohomology and Goresky--Kottwitz--MacPherson (GKM) theory\cite{goresky1998equivariant}, where an important ingredient from geometry is that the torus action on this class of toric variety is \emph{equivariantly formal}. Actually, one can define a graded ring $\mathcal{A}(\Sigma)$, known as the \emph{Stanley--Reisner ring}, for any complete simplicial fan $\Sigma$, see Section \ref{sec-SR}. It is an algebra over a polynomial ring $\mathcal{A}$, and hence in particular
a graded $\mathcal{A}$-module. If $\Sigma$ is rational with respect to some lattice, the ring $\mathcal{A}(\Sigma)$ is actually canonically isomorphic to the torus-equivariant cohomology of the associated toric variety, the ring $\mathcal{A}$ is isomorphic to the torus-equivariant cohomology of a point, and equivariantly formality implies that $\mathcal{A}(\Sigma)$ is a \emph{free} $\mathcal{A}$-module. It follows that the usual cohomology of the associated toric variety is isomorphic to the Artinian reduction $\overline{\mathcal{A}(\Sigma)}$---the quotient of $\mathcal{A}(\Sigma)$ by the augmentation ideal of $\mathcal{A}$. Then the Poincar\'e polynomial of the associated toric variety can be computed using the combinatorics of the fan, known as the \emph{h-polynomial}. However, the definitions of $\mathcal{A}(\Sigma)$ and $\mathcal{A}$ does not depend on the existence of the toric variety and the free-ness holds for any complete simplicial fan $\Sigma$, see \cite{stembridge1994some}.

As a consequence, when there is a finite group G acting linearly on a vector space $V$ preserving a polytope or more generally a complete simplicial fan $\Sigma$ in $V$, one can study the induced action on the face poset, and the induced graded representation on $\overline{\mathcal{A}(\Sigma)}$, which may give some useful combinatorial information on the polytope or the fan, for example, information about the face numbers. In this paper, we study this representation and give the graded character, which we call the \emph{equivariant h-polynomial}. Denote the graded character of $g \in G$ on ${\overline{\mathcal{A}(\Sigma)}}$ by 
\begin{equation} \label{eq-def of graded cha}
P_G(t, g):=\sum_{i \geqslant 0} \operatorname{Trace}\left(g, \overline{\mathcal{A}(\Sigma)}^{i}\right) t^i \in \mathbb{C}[t],    
\end{equation}
whose information is equivalent to the equivariant Hilbert series of $\overline{\mathcal{A}(\Sigma)}$
$$
\operatorname{R}_{\overline{\mathcal{A}(\Sigma)}}(t):=\sum_i [\overline{\mathcal{A}(\Sigma)}^{i}] t^i \in R(G)[t],
$$
where $R(G)$ denotes the complex character ring of $G$. For any $\sigma \in \Sigma$, let $G_{\sigma }$ denote the setwise stabilizer of $\sigma \in \Sigma$, let $V_\sigma$ be the linear span of the cone $\sigma$ in $V$, then we have an induced quotient representation $\psi_{\sigma}: G_{\sigma} \rightarrow \mathrm{GL}\left(V/ V_\sigma\right)$. 
The following is one of our main results.

\begin{theorem} \label{thm-main}
For a finite group $G$ acting linearly on $V$ preserving a complete simplicial fan $\Sigma$, the equivariant Hilbert series of $\overline{\mathcal{A}(\Sigma)}$ is given by\footnote{Although the expression on the right hand side of \eqref{eq-general} contains fractions, an expression involving only integers can be obtained by allowing $\sigma$ to vary over a choice of representatives for the $G$-orbits of $\Sigma$.}
\begin{equation} \label{eq-general}
   \operatorname{R}_{\overline{\mathcal{A}(\Sigma)}}(t)=\sum_{\sigma \in \Sigma} \frac{\left|G_{\sigma }\right|}{\left|G\right|} \operatorname{Ind}_{G_{\sigma}}^{G}\left(\operatorname{det}(tI-\psi_\sigma)\right),
\end{equation}
where $\operatorname{det}(tI-\psi_\sigma)$ is viewed as a $\mathbb{C}[t]$-valued class function on $G_{\sigma }$ whose value is $\operatorname{det}(tI-\psi_\sigma(g))$ when evaluates at $g \in G$.
\end{theorem}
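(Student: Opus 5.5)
Since $\mathcal{A}(\Sigma)$ is a free $\mathcal{A}$-module, as recalled above from \cite{stembridge1994some}, the plan is to compute the equivariant Hilbert series of $\mathcal{A}(\Sigma)$ and divide by that of $\mathcal{A}$. Here $\mathcal{A}=\operatorname{Sym}(V^*)$ is the polynomial ring with generators in degree $1$ (grading halved if one uses the cohomological convention). Freeness, together with $G$-equivariance of the $\mathcal{A}$-algebra structure, gives a graded $G$-equivariant isomorphism $\mathcal{A}(\Sigma)\cong \mathcal{A}\otimes \overline{\mathcal{A}(\Sigma)}$. To obtain it, lift a homogeneous basis of $\overline{\mathcal{A}(\Sigma)}$ equivariantly; this is possible because $\mathbb{C}G$ is semisimple, so the surjection $\mathcal{A}(\Sigma)^i\to\overline{\mathcal{A}(\Sigma)}^i$ splits $G$-equivariantly in each degree. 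By graded Nakayama the lifted map $\mathcal{A}\otimes\overline{\mathcal{A}(\Sigma)}\to\mathcal{A}(\Sigma)$ is surjective, and it is an isomorphism by freeness. Hence, for each $g$,
\[
\operatorname{Hilb}(\mathcal{A}(\Sigma);g,t)=P_G(t,g)\cdot\frac{1}{\det(1-tg|_{V})},
\]
using Molien's formula for $\mathcal{A}$. I would be careful about whether $g$ acts on $V$ or on $V^*$; these differ by complex conjugation, and both are harmless because the final answer involves determinants of real or quotient actions.

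Next I compute the graded trace on $\mathcal{A}(\Sigma)$ directly. As a vector space, $\mathcal{A}(\Sigma)$ (piecewise polynomials, equivalently the face ring) decomposes as $\bigoplus_{\sigma\in\Sigma} M_\sigma$. Here $M_\sigma$ is spanned by monomials whose support is exactly the ray set of $\sigma$; equivalently it is $x_\sigma\cdot\operatorname{Sym}(V_\sigma^*)$, where $x_\sigma$ is the product of the ray generators of $\sigma$. The group $G$ permutes these summands. Taking the trace of $g$, only the summands with $g\sigma=\sigma$ contribute. On such a summand $g$ permutes the rays of $\sigma$, so it fixes the squarefree monomial $x_\sigma$ up to the same monomial, and it acts on $\operatorname{Sym}(V_\sigma^*)$. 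Writing $d_\sigma=\dim\sigma$, this gives
\[
\operatorname{tr}(g|M_\sigma)=\frac{t^{d_\sigma}}{\det(1-tg|_{V_\sigma})}.
\]
Dividing by the Molien factor of $V$ and using $\det(1-tg|_V)=\det(1-tg|_{V_\sigma})\det(1-tg|_{V/V_\sigma})$, the term for $\sigma$ becomes $t^{d_\sigma}\det(1-tg|_{V/V_\sigma})$. With $n=\dim V$, this equals $t^{n}\det(t^{-1}-g|_{V/V_\sigma})$ up to the homogeneity rescaling.

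This is the main obstacle: the formula in \eqref{eq-general} is stated with $\det(tI-\psi_\sigma)$, which is the reciprocal polynomial of what I obtained. To reconcile them I would use Poincaré duality of $\overline{\mathcal{A}(\Sigma)}$ for a complete fan, i.e.\ the equivariant Dehn--Sommerville symmetry. Concretely, I would show that the sum over $\sigma$ is invariant under $t\mapsto t^{-1}$ after multiplication by $t^n$, twisted by the character $\det(g|_V)$ or its inverse. Alternatively I would pair each $\sigma$ with the Euler-characteristic/link structure. The simplest route is to prove the combinatorial identity $\sum_{\sigma\ni g} t^{d_\sigma}\det(1-tg|_{V/V_\sigma})=\sum_{\sigma} \det(t-g|_{V/V_\sigma})$ by expanding both sides in $t$. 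This expansion uses, for each $g$, the fact that the $g$-fixed subfan is a complete fan on $V^g$ whose Euler characteristic is $(-1)^{\dim V^g}$; I would check this carefully in low dimensions first.

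Finally, summing over the fixed $\sigma$ and rewriting in terms of orbits gives the induced-character form. Indeed, $\frac{|G_\sigma|}{|G|}\operatorname{Ind}_{G_\sigma}^G\chi(g)=\frac{1}{|G|}\sum_{h\in G,\ h^{-1}gh\in G_\sigma}\chi(h^{-1}gh)$, and summing this over all $\sigma$ (rather than over orbit representatives) reproduces exactly $\sum_{\sigma:\,g\sigma=\sigma}\chi_\sigma(g)$.
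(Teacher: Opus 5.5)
\textbf{There is a genuine gap: the palindromicity step is never proved.} Your first and last steps are exactly the paper's. These are the decomposition \eqref{eq-decom of SR}, in which only $g$-stable cones contribute $t^{\dim\sigma}/\det(I-tg|_{V_\sigma})$; division by the Molien factor of $\mathcal{A}$ via the equivariant isomorphism \eqref{eq-Giso}; the splitting $\det(I-tg|_V)=\det(I-tg|_{V_\sigma})\det(I-tg|_{V/V_\sigma})$; and the Frobenius re-indexing.

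The missing step is the one you call the ``main obstacle'': you never establish $P_G(t,g)=t^dP_G(t^{-1},g)$ with $d=\dim V$. You also leave open a possible twist by $\det(g|_V)^{\pm1}$, and ruling that twist out is the whole content of the step. With a twist $\epsilon(g)$ you would obtain $\epsilon(g)\sum_{\sigma\in\Sigma^g}\det(tI-\psi_\sigma(g))$ instead of \eqref{eq-general}. Already for $G=\{\pm1\}$ acting on the two-ray complete fan in $\mathbb{R}$ one has $P_G(t,-1)=1+t$, which is incompatible with a $\det$-twisted duality.

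The paper closes this with two ingredients. First, $\mathcal{A}(\Sigma)$ is Gorenstein, so $\overline{\mathcal{A}(\Sigma)}$ has a $G$-equivariant perfect pairing into its one-dimensional top degree. Second, $G$ acts trivially on $\overline{\mathcal{A}(\Sigma)}^d$. The wall-crossing relations \eqref{eq-adj} come from the linear system of parameters, with determinants of opposite sign on the two sides of a wall. They show that $\omega=x_\sigma/|\det(v_\sigma)|$ does not depend on the maximal cone $\sigma$, and $|\det g|=1$ then gives $g\omega=\omega$. Self-duality of real representations yields $\overline{\mathcal{A}(\Sigma)}^k\cong\overline{\mathcal{A}(\Sigma)}^{d-k}$.

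\textbf{Your Euler-characteristic route can replace this, but only in a local form.} The global identity $\sum_{\sigma\in\Sigma^g}(-1)^{\dim V_\sigma^g}=(-1)^{\dim V^g}$ for the fixed subfan is a single scalar relation. Palindromicity is a whole family of coefficient identities, so you need the identity for every $g$-stable cone $\tau$:
\begin{itemize}
\item The complete fan $\operatorname{Star}(\tau)$ in $V/V_\tau$ cuts $(V/V_\tau)^g$ into the relatively open cones coming from the $g$-stable $\sigma\supseteq\tau$. These have dimensions $\dim V_\sigma^g-\dim V_\tau^g$, so $\sum_{\sigma\in\Sigma^g,\,\sigma\supseteq\tau}(-1)^{\dim V_\sigma^g}=(-1)^{\dim V^g}$.
\item Combine this with the conewise reciprocity $t^{-\dim\sigma}/\det(I-t^{-1}g|_{V_\sigma})=(-1)^{\dim V_\sigma^g}/\det(I-tg|_{V_\sigma})$, which holds because $g$ permutes the rays of $\sigma$.
\item Expand the trace on $\mathbb{C}[x_\rho:\rho\in\sigma(1)]$ over the $g$-stable faces $\tau\le\sigma$ and swap the sums. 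This gives $F(t^{-1})=(-1)^{\dim V^g}F(t)$, where $F$ is the graded trace of $g$ on $\mathcal{A}(\Sigma)$.
\item Since $\det(tI-g|_V)=(-1)^d\det(g)\det(I-tg|_V)$ and $(-1)^d\det g=(-1)^{\dim V^g}$ for $g$ of finite order, the signs cancel. Hence $P_G(t,g)=\det(I-tg|_V)F(t)$ is palindromic with no twist.
\end{itemize}
Carried out, this would be a purely combinatorial substitute for the paper's Gorenstein argument. As written, though, your proposal supplies neither argument.
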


It is not hard to see that the case $g=\operatorname{id}$ of our formula \eqref{eq-general} gives the usual h-polynomial \eqref{eq-h-poly} of $\Sigma$, hence we call the right hand side of \eqref{eq-general} the \emph{equivariant h-polynomial} of $\Sigma$ under the action of $G$. As mentioned earlier, \eqref{eq-general} gives the equivariant Hilbert series of the cohomology $H^*\left(X_{\Sigma}; \mathbb{R}\right)$ of a compact toric orbifold $X_{\Sigma}$ under a finite group $G$ when the $G$  action preserves a lattice $M \subset V$ and an $M$-rational complete simplicial fan $\Sigma$ in $V$. 

In \cite{stapledon2011equivariant}, Stapledon introduce and develop an equivariant generalization of the classical Ehrhart theory by studying lattice points and representations in dilations of an invariant lattice polytope under a finite group $G$ action. He prove in \cite[Proposition 8.1]{stapledon2011equivariant} that under certain conditions, his \emph{equivariant $h^*$-polynomial} is equal to the equivariant Hilbert series of the cohomology of toric variety associated to a smooth $G$-invariant rational fan $\Delta$. Therefore, under the conditions of \cite[Proposition 8.1]{stapledon2011equivariant}, assume further that $\Delta$ is complete, then the corresponding equivariant $h^*$-polynomial in \cite[Proposition 8.1]{stapledon2011equivariant} can be computed by the equivariant h-polynomial of $\Delta$ using the right hand side of in \eqref{eq-general}.

When $G$ is a Weyl group $W$ acting on the standard $W$-permutohedron and the normal fan consisting of Weyl chambers, our formula \eqref{eq-general} recover the well known formula due to Procesi\cite{Procesi90}, Dolgachev--Lunts\cite{dolgachev1994character}, Stembridge\cite{stembridge1994some}, and Lehrer\cite{lehrer2008rational}; see \cite[Theorem 1.1]{lehrer2008rational}. In fact, Stembridge\cite[Theorem 1.4]{stembridge1994some} derived a graded character formula for a finite subgroup $G$ of the orthogonal group $O(V)$ acting on a general complete simplicial fan $\Sigma$, with the additional assumption that the action of $G$ on $\Sigma$ is \emph{proper}. By definition, an action of $G$ on $\Sigma$ is proper if for any $g \in G$, $g$ fixes a cone $\sigma$ setwise, then it must fix $\sigma$ pointwise. By a similar argument as in \cite[Appendix A]{lehrer2008rational} combined with our proof of \eqref{eq-general}, our formula \eqref{eq-general} recover Stembridge's formula under his assumption of proper actions.

Some work has been done to study combinatorics of simplicial polytopes with certain simple symmetries, see, e.g., \cite{stanley1987number,adin1995face,jorge2003combinatorics}. It is interesting to see whether our graded character formula \eqref{eq-general} can be investigated to study the combinatorics of simplicial polytopes and complete simplicial fans with symmetries.

An important class of symmetries of polytopes and fans consists of the \emph{reflection symmetries}. For example, it is well-known that the groups of all symmetries of Platonic solids are finite reflection groups (that is, a finite group which is generated by reflections in an Euclidean space)\cite{coxeter1973regular}. More generally, a class of polytopes with many reflection symmetries is the class of $W$-permutohedrons (or weight polytopes). By definition, a $W$-permutohedron is the convex hull of a reflection group $W$-orbit of a point in the weight space and thus $W$ has a natural action on it. 

When $W$ is a finite reflection group acting on a complete simplicial fan $\Sigma$, we have the following consequences of the above general formula \eqref{eq-general}.

\begin{theorem} \label{thm-ref}
For a finite reflection group $W$ on $V$ preserving a complete simplicial fan $\Sigma$, then the Poincar\'e polynomial of the $W$-invariants 
$\mathrm{P}_{\overline{\mathcal{A}(\Sigma)}^W}(t)$ is equal to the h-polynomial of the hybrid fan $\Sigma_W$, defined in Definition \ref{def-hybrid fan}.
\end{theorem}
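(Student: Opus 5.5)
The plan is to specialize Theorem~\ref{thm-main} to the trivial character and then identify the resulting sum, term by term, with the h-polynomial of $\Sigma_W$. Taking the multiplicity of the trivial representation in \eqref{eq-general} and applying Frobenius reciprocity gives
\begin{equation*}
\mathrm{P}_{\overline{\mathcal{A}(\Sigma)}^W}(t)=\sum_{\sigma\in\Sigma}\frac{|W_\sigma|}{|W|}\cdot\frac{1}{|W_\sigma|}\sum_{g\in W_\sigma}\det(tI-\psi_\sigma(g))=\sum_{[\sigma]\in\Sigma/W}\frac{1}{|W_\sigma|}\sum_{g\in W_\sigma}\det(tI-\psi_\sigma(g)).
\end{equation*}
The last sum runs over a set of representatives of the $W$-orbits on $\Sigma$. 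So it suffices to evaluate each orbit average in closed form.

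\textbf{Evaluating the orbit averages.} Fix an orthogonal complement $U_\sigma$ of $V_\sigma$; it is $W_\sigma$-stable and realizes $\psi_\sigma$. Expanding the characteristic polynomial gives
\begin{equation*}
\frac{1}{|W_\sigma|}\sum_{g\in W_\sigma}\det(tI-\psi_\sigma(g))=\sum_k(-1)^k t^{\dim U_\sigma-k}\dim(\Lambda^kU_\sigma)^{W_\sigma}.
\end{equation*}
If $W_\sigma$ acted on $U_\sigma$ as a reflection group, Solomon's theorem would give $(\Lambda U_\sigma)^{W_\sigma}=\Lambda(U_\sigma^{W_\sigma})$. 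The average would then collapse to $(t-1)^{d(\sigma)}$, where $d(\sigma)=\dim U_\sigma^{W_\sigma}$.

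The key input is therefore a structural claim about $W_\sigma$. Let $x$ be the sum of the primitive ray vectors of $\sigma$. Then $x$ lies in the relative interior of $\sigma$ and is fixed by $W_\sigma$. Hence $W_\sigma$ is contained in the parabolic subgroup $W_x$, which is a reflection group by Steinberg's theorem. Moreover, $W_\sigma$ is the subgroup of $W_x$ stabilizing the cone $\sigma$ inside $\mathrm{Star}(x)$.

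I would decompose $W_\sigma$ as the pointwise stabilizer of $V_\sigma$, which is a parabolic reflection subgroup acting on $U_\sigma$ by reflections, extended by a group acting faithfully on $V_\sigma$ by permuting the rays of $\sigma$. The aim is to show that the invariants of $\Lambda U_\sigma$ under the full group $W_\sigma$ are still $\Lambda$ of the fixed space. If this fails in general, I would retain the answer as the exact polynomial $\sum_k(-1)^kt^{\dim U_\sigma-k}\dim(\Lambda^kU_\sigma)^{W_\sigma}$ and match that instead.

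\textbf{Matching with the hybrid fan.} Next, use Definition~\ref{def-hybrid fan}, which I expect describes $\Sigma_W$ as a complete simplicial fan built from $\Sigma$ and the Coxeter arrangement of $W$, for instance by cutting along a fundamental chamber. I would construct a bijection between the $W$-orbits of cones of $\Sigma$ (or the relevant fixed-space data) and the cones $\tau$ of $\Sigma_W$ satisfying $\operatorname{codim}\tau=d(\sigma)$. Since the h-polynomial in the paper's convention is $\sum_{\tau}(t-1)^{\operatorname{codim}\tau}$, which is the $g=\operatorname{id}$ case of \eqref{eq-general}, summing over orbits then gives the theorem.

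I expect the main obstacle to be the middle step: controlling $(\Lambda U_\sigma)^{W_\sigma}$ when $W_\sigma$ is only a setwise stabilizer and not itself a reflection group on $U_\sigma$. I would first try the reduction to the parabolic subgroup $W_x$, using that $W_x$ acts on the star of $x$. Failing that, I would design the bijection with $\Sigma_W$ so that several cones of $\Sigma_W$ absorb the correction terms.
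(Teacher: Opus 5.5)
Your opening reduction (Frobenius reciprocity to a sum of orbit averages) is the same as the paper's. The evaluation that follows, however, contains an error that breaks the matching step.

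Even granting $(\Lambda U_\sigma)^{W_\sigma}=\Lambda(U_\sigma^{W_\sigma})$, you get $\dim(\Lambda^kU_\sigma)^{W_\sigma}=\binom{d(\sigma)}{k}$. The orbit average is therefore $t^{\dim U_\sigma-d(\sigma)}(t-1)^{d(\sigma)}$, not $(t-1)^{d(\sigma)}$. You dropped the factor $t^{r}$ with $r=\dim U_\sigma-d(\sigma)$, which records the reflections in $W_\sigma$ acting nontrivially on $V/V_\sigma$. Because of this factor, a bijection between $W$-orbits of cones of $\Sigma$ and cones of $\Sigma_W$ cannot exist in general.

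Take $W=\{\pm1\}$ acting on $V=\mathbb{R}$ and $\Sigma=\{\{0\},\mathbb{R}_{\ge0},\mathbb{R}_{\le0}\}$. There are two orbits. The orbit of $\{0\}$ contributes $\tfrac12\bigl((t-1)+(t+1)\bigr)=t$, whereas your formula gives $1$, so $\mathrm{P}_{\overline{\mathcal{A}(\Sigma)}^W}(t)=1+t$. Meanwhile $\Sigma_W$ has three cones, $\{0\}$, $\mathbb{R}_{\ge0}$ and $\operatorname{cone}(-\alpha)=\mathbb{R}_{\le0}$. The orbit of $\{0\}$ has to be matched with two of them, $\{0\}$ and $\operatorname{cone}(-\alpha)$.

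What is missing is this one-orbit-to-many correspondence. It cannot be found without the actual Definition~\ref{def-hybrid fan}, which you replaced by a guess: the rays are the rays of $\Sigma$ lying in $D$ together with the negative simple roots, and $\sigma_{I,J}\in\Sigma_W$ if and only if $\operatorname{cone}(W_J\cdot\{\rho_i\}_{i\in I})\in\Sigma$.

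The paper proceeds as follows.
\begin{itemize}
\item It represents each orbit by the unique cone $\sigma$ whose relative interior meets $D$ (Proposition~\ref{prop-unique}), and shows that $W_\sigma=W_K$ is a standard parabolic subgroup.
\item It splits $K=J_0\sqcup K'$ according to whether a simple reflection swaps two rays of $\sigma$ or fixes $V_\sigma$ pointwise. Then $r=|K|-|J_0|$ and $d(\sigma)=d-|I|-|K|$, where $I$ indexes the rays of $\sigma$ in $D$.
\item The hybrid cones with $\sigma'_{I,J}=\sigma$ are exactly those with $J_0\subseteq J\subseteq K$, a Boolean interval of size $2^r$.
\item The binomial theorem gives $\sum_{J_0\subseteq J\subseteq K}(t-1)^{d-|I|-|J|}=t^{r}(t-1)^{d-|I|-|K|}$, which is the corrected orbit contribution.
\end{itemize}

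By contrast, the step you flagged as the main obstacle is easy. Any $w\in W_x$ maps $\sigma$ to a cone whose relative interior contains $x$, so $W_\sigma=W_x$ is generated by reflections. Each of these preserves $V_\sigma$ and therefore has its root in $V_\sigma$ or orthogonal to it. Hence $W_\sigma$ acts on $U_\sigma$ through a reflection group, and your collapse of exterior invariants is valid.
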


As mentioned earlier, compact toric orbifolds are given by complete simplicial fans which are rational with respect to some lattice\cite[Theorem 3.1.9]{cox2024toric}, and the cohomology of the compact toric orbifold is given by the Artinian reduction of the Stanley--Reisner ring of the corresponding fan. Since it is well known that the cohomology of the quotient of a variety under a finite group $G$ is isomorphic to the $G$-invariant of the cohomology of the original variety, a consequence of Theorem \ref{thm-ref} is the following 

\begin{theorem} \label{thm-Poincare}
Let $W$ be a finite reflection group on $V$ preserving a lattice $M \subset V$ and an $M$-rational complete simplicial fan $\Sigma$ in $V$, with associated compact toric orbifold $X_\Sigma$. Then the Poincar\'e polynomial of the cohomology $H^*\left(X_\Sigma / W ; \mathbb{R}\right)$ of the quotient $X_\Sigma / W$ equals the Poincar\'e polynomial of the cohomology $H^*\left(X_{\Sigma_W}; \mathbb{R}\right)$ of the compact toric orbifold $X_{\Sigma_W}$ associated to the hybrid fan $\Sigma_W$.
\end{theorem}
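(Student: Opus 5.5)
Theorem \ref{thm-Poincare} follows by combining Theorem \ref{thm-ref} with the standard identifications from toric geometry recalled in the introduction. The plan is to compute $H^*(X_\Sigma/W;\mathbb{R})$ as a chain of isomorphisms of graded vector spaces ending at $\overline{\mathcal{A}(\Sigma)}^W$. Then Theorem \ref{thm-ref} identifies its Poincar\'e polynomial with the h-polynomial of $\Sigma_W$, and the same toric dictionary, now applied to $\Sigma_W$, identifies that h-polynomial with the Poincar\'e polynomial of $X_{\Sigma_W}$.

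The first step is to set up the action. Since $W$ preserves $M$ and the $M$-rational fan $\Sigma$, it acts on $X_\Sigma$ by toric morphisms, permuting the torus orbits according to its action on cones. Next, for a finite group acting on a reasonable space (here a compact complex algebraic variety, triangulable compatibly with the action), the transfer gives $H^*(X_\Sigma/W;\mathbb{R})\cong H^*(X_\Sigma;\mathbb{R})^W$. Third, the identification $H^*(X_\Sigma;\mathbb{R})\cong\overline{\mathcal{A}(\Sigma)}$ must be $W$-equivariant, not merely an isomorphism of graded vector spaces. I would argue this via equivariant cohomology. The $W$-action normalizes the torus $T$, so $W$ acts compatibly on $H_T^*(X_\Sigma)\cong\mathcal{A}(\Sigma)$ and on $H_T^*(\mathrm{pt})\cong\mathcal{A}$, where the action on $\mathcal{A}=\mathrm{Sym}(V^*)$ is the natural one. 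The Stanley--Reisner description is canonical: generators correspond to rays, or to piecewise polynomials on $\Sigma$, and $W$ permutes these. Equivariant formality gives $H^*(X_\Sigma)\cong H_T^*(X_\Sigma)\otimes_{\mathcal{A}}\mathbb{R}$. The map realizing this is induced by the fiber inclusion $X_\Sigma\to X_\Sigma\times_T ET$, and this map commutes with the $W$-action once $W$ is let act on $ET$ compatibly, that is, using the group $T\rtimes W$. Passing to $W$-invariants then yields $H^*(X_\Sigma/W)\cong\overline{\mathcal{A}(\Sigma)}^W$ as graded spaces.

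Finally, I need that $\Sigma_W$ is itself an $M'$-rational complete simplicial fan for some lattice $M'$, so that $X_{\Sigma_W}$ is a compact toric orbifold whose Poincar\'e polynomial is its h-polynomial; this is the same statement as above applied with the trivial group. The rationality depends on Definition \ref{def-hybrid fan}, which is not visible here. Presumably $\Sigma_W$ is built from cones of $\Sigma$ intersected with Weyl chambers, or cones spanned by rays of $\Sigma$ and of the Coxeter arrangement. The rays of $\Sigma$ are $M$-rational. The reflecting hyperplanes of a reflection group preserving a lattice are rational, because each reflection lies in $\mathrm{GL}(M)$, so its $(-1)$-eigenvector is rational. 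Hence the resulting rays are rational with respect to $M$, or at worst with respect to a finite-index refinement of it, and $X_{\Sigma_W}$ exists. I expect the main obstacle to be the third step, making the equivariance of the Stanley--Reisner presentation and of the Artinian reduction rigorous, including possible sign or twist issues. I would handle it by working with the piecewise-polynomial (GKM) model of $H_T^*(X_\Sigma)$, where the $W$-action is manifestly by pullback of functions on $V$.
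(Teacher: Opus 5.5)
Your proposal is correct and follows the paper's route. The paper likewise deduces the statement from Theorem \ref{thm-ref} via $H^*(X_\Sigma/W;\mathbb{R})\cong H^*(X_\Sigma;\mathbb{R})^W\cong\overline{\mathcal{A}(\Sigma)}^W$, simply taking the $W$-equivariance of the toric identification for granted where you justify it through $T\rtimes W$-equivariant cohomology, and for $X_{\Sigma_W}$ it invokes Propositions \ref{prop-completeness} and \ref{prop-simplicial} (completeness and simpliciality, which you left unaddressed) together with \cite[Proposition 2.5]{crowley2024toric} (rationality) — the latter is already given directly by your observation that the lines $\mathbb{R}\alpha_j$ are $M$-rational, since the rays of $\Sigma_W$ are the rays of $\Sigma$ lying in $D$ together with the rays spanned by the $-\alpha_j$.
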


Note that under the condition of Theorem \ref{thm-Poincare}, by Proposition \ref{prop-completeness}, Proposition \ref{prop-simplicial}, and \cite[Proposition 2.5]{crowley2024toric}, the hybrid fan $\Sigma_W$, defined in Definition \ref{def-hybrid fan}, is still a $M$-rational complete simplicial fan hence it gives a compact toric orbifold $X_{\Sigma_W}$.

The motivation of Theorem \ref{thm-ref} and Theorem \ref{thm-Poincare} comes from the work of \cite{HMSS21-toric-orbifolds}. In \cite{HMSS21-toric-orbifolds}, Horiguchi--Masuda--Shareshian--Song considered toric variety associated with Weyl chambers (whose fan is the normal fan $\Sigma_{P^{\lambda}}$ of a weight polytope $P^\lambda$ associated with a regular weight $\lambda$) of all classical Lie types and proved the invariants $H^*\left(X_{\Sigma_{P^{\lambda}}}\right)^{W_J}$ of the rational cohomology of the toric variety $X_{\Sigma_{P^{\lambda}}}$ under any parabolic subgroup $W_J$ of the Weyl group is isomorphic to the rational cohomology $H^*\left(X_{\Sigma_{(P^{\lambda}/W_J)}}\right)$ of the toric varieties associated with the normal fan of the quotient polytope $P^{\lambda}/W_J$. Based on this result, they asked in \cite[Question 8.3]{HMSS21-toric-orbifolds} whether the quotient $X_{\Sigma_P}/W$ of the projective toric variety $X_{\Sigma_P}$ associated to the normal fan $\Sigma_P$ of any simple polytope $P$ modulo an action of a finite reflection group $W$ is isomorphic to the projective toric variety $X_{\Sigma_{P/W}}$ associated with the normal fan of the quotient polytope $P/W$. Soon-after, Crowley--Gong--Simpson\cite{crowley2024toric} gave a positive answer to this question even without assuming the polytope $P$ being simple. During their study for \cite[Question 8.3]{HMSS21-toric-orbifolds} , Colin Crowley and Connor Simpson came up with the definition of the hybrid fan $\Sigma_W$. Since the normal fan $\Sigma_{P / W}$ of the any quotient polytope $P / W$ is the same as the hybrid fan $\left(\Sigma_P\right)_W$ of the normal fan $\Sigma_P$ of $P$ with the reflection group $W$ (see Proposition \ref{prop-polytopal}), the above Theorem \ref{thm-Poincare} may be regarded as a generalization (of course only on the Poincar\'e polynomial level) of the results in \cite{HMSS21-toric-orbifolds} to arbitrary (possibly non-rational) complete (possibly non-polytopal) simplicial fan, and is a shadow of isomorphisms on the cohomology level and variety level when the fan is rational. We will discuss some related open question in Section \ref{sec-ques}.

The rest of this paper is organized as follows. In Section \ref{sec-pre}, we review the Stanley--Reisner rings and h-polynomials of polytopes and fans, and introduce group actions on fans and Stanley--Reisner rings. In Section \ref{sec-proof}, we prove that the graded character $
P_G(t, g)\in \mathbb{C}[t]$ in \eqref{eq-def of graded cha} is palindromic as a polynomial in $t$ and gave a proof of Theorem \ref{thm-main}. In Section \ref{sec-ref}, we study reflection actions on fans and Stanley--Reisner rings, and gave a proof of Theorem \ref{thm-ref}. Finally, in Section \ref{sec-ques}, we present some open questions and conjectures for future research.

Although our setting is  entirely in convex geometry,
in many places we point out the connections with the topology of toric varieties.
Although this is not necessary to understand our proofs, these connections are essential for us to motivate the
statements and the topological interpretation  provides a powerful way of understanding what these results
mean.

\subsection*{Acknowledgments}
The author is grateful to Colin Crowley and Connor Simpson for kindly sharing their definition of the hybrid fan by personal communication and to Connor Simpson for other valuable discussions, and would like to thank Tom Braden, Feifei Fan, Tao Gong, Hongsheng Hu, Nick Proudfoot, Alan Stapledon, Matthew Hongye Xie, and Haozhi Zeng for useful communications. This research was done while the author was
visiting the Institute of Mathematical Sciences in ShanghaiTech University and the Institute for Math \& AI in Wuhan University, the author thank Junbin Dong and Chenglang Yang for their hospitality. The author is supported in part by NSFC: 12471309.

\section{Preliminaries} \label{sec-pre}

In this section, we fix our notations and definitions for later use.

\subsection{Polytopes and Fans}

Let $V$ be a $d$-dimensional Euclidean space with a positive definite bilinear form $\langle-,-\rangle$, let $P$ denote a $d$-dimensional polytope in $V$. 

A (convex polyhedral) \emph{cone} in $V$ is a set of the form $\sum \mathbb{R}_{\geq 0} v_i$ with $v_i \in V, i=1, \cdots, n$, it is \emph{strongly convex} if it does not contain any vector subspace other than the origin. A \emph{fan} $\Sigma$ in $V$ is a finite collection of strongly convex polyhedral cones in $V$ so that every face of a strongly convex cone
in $\Sigma$ is again in $\Sigma$, and the intersection of any two cones in $\Sigma$ is a face of each. We write $\Sigma_k$ to denote the set of all cones in $\Sigma$ of dimension $k$. A one-dimensional cone in $\Sigma_1$ is called a \emph{ray}. Note that the set of all faces of a cone $\sigma$ forms a fan $[\sigma]$, and we denotes $\sigma(1)$ to be the set of rays of this fan.

A polytope $P$ in $V$ gives rise to two fans $\Delta_P$ and $\Sigma_P$ in $V$. The first fan $\Delta_P$ is known as the \emph{central fan} of $P$, by choosing the origin to be an interior point of $P$ and coning off all faces $F \neq P$. The second fan $\Sigma_P$ is known as the \emph{normal fan} of $P$, which can be defined to be the central fan of the polar dual $P^*$ of P. 

One can get a poset $\mathcal{F}(\Sigma)$ out of a fan $\Sigma$, called the \emph{face poset} or \emph{combinatorial type} of $\Sigma$, where the elements in $\mathcal{F}(\Sigma)$ are the cones in $\Sigma$ and the partial order $\tau \leq \sigma$ means a cone $\tau$ is a face of $\sigma$. 

The \emph{support} $|\Sigma|$ of $\Sigma$ is the union of all its cones. A fan $\Sigma$ in $V$ is \emph{complete} if $|\Sigma|=V$. The two fans $\Delta_P$ and $\Sigma_P$ from a polytope are complete, but there are combinatorial types of a complete fan which can not be realized as that of the normal fan of any convex polytope, see for example \cite[Example 5.10]{ziegler2012lectures}. 

A fan $\Sigma$ in $V$ is \emph{simplicial} if each cone in $\Sigma$ is a cone over a
simplex. Thus the fan $\Delta_P$ is a simplicial fan if and only if $P$ is a simplicial polytope, which means each face of $P$ is a simplex. 

\subsection{Stanley--Reisner rings and h-polynomials} \label{sec-SR}
Fix a complete simplicial fan $\Sigma$ in $V$. Let $\mathcal{A}=\operatorname{Sym}\left(V^*\right)$ be the ring of polynomial functions on $V$. 
A function $f:V \rightarrow \mathbb{R}$ is called \emph{conewise polynomial} if for all $\sigma \in \Sigma$ the restriction $\left.f\right|_\sigma$ is a polynomial. The set $\mathcal{A}(\Sigma)$ of such functions is a commutative graded (with the usual grading where linear functions have degree 1) ring under the operations of pointwise addition and multiplication. There is a natural injective ring homomorphism $\mathcal{A} \rightarrow \mathcal{A}(\Sigma)$ obtained by restricting polynomials to $|\Sigma|$. This makes $\mathcal{A}(\Sigma)$ into an algebra over $\mathcal{A}$, and in particular a graded $\mathcal{A}$-module.

After a moment thought, one can see that $\mathcal{A}(\Sigma)$ is isomorphic to the quotient of the polynomial ring $\mathbb{R}\left[x_\rho\right]$ with one generator for each ray $\rho \in \Sigma_1$ by the square-free monomial ideal
$$
\left.\left\langle x_{\rho_1} \cdots x_{\rho_k}\right| \rho_1, \ldots, \rho_k \text { are not the rays of a cone in } \Sigma\right\rangle.
$$
This ring is known as the \emph{Stanley--Reisner ring}, or \emph{face ring}, of $\Sigma$, when $\Sigma$  is viewed as an abstact simplicial complex. The isomorphism comes by identifying the generator $x_\rho$ with a conewise linear function which restricts to a one on a ray generator $v$ of $\rho$ and to zero on all other $\rho^{\prime} \in \Sigma_1$. 

Since every non-zero monomial $m$ in the Stanley--Reisner ring $\mathcal{A}(\Sigma)$ of $\Sigma$ has a minimal face $\sigma \in \Sigma$ supporting it (i.e., $m = \prod_{\rho_i \in \sigma(1)} x_i^{a_i}$ with all $a_i \ge 1$), as a grade vector space over $\mathbb{R}$, $\mathcal{A}(\Sigma)$ admits a direct sum decomposition
\begin{equation} \label{eq-decom of SR}
   \mathcal{A}(\Sigma)=\bigoplus_{\sigma \in \Sigma} \mathcal{A}(\Sigma)_\sigma, 
\end{equation}
where the summand $\mathcal{A}(\Sigma)_\sigma$ has a basis formed by the monomials (not necessarily square-free) whose support is exactly the rays of cone $\sigma$ in $\Sigma$, hence as graded vector space, we have
\begin{equation} \label{eq-summand of SR}
\mathcal{A}(\Sigma)_\sigma=x_\sigma \cdot \mathbb{C}[x_{\rho_i} \mid \rho_i \in \sigma(1)]
\end{equation}
where $x_\sigma = \prod_{\rho_i \in \sigma(1)} x_{\rho_i}$ is the square-free monomial of degree $\dim(\sigma)$.

Therefore, a simple counting on monomials in the $x_\rho$ shows that the Hilbert series of $\mathcal{A}(\Sigma)$ is
$$
\operatorname{Hilb}(\mathcal{A}(\Sigma), t)=h_\Sigma( t)/(1-t)^d,
$$
where 
\begin{equation} \label{eq-h-poly}
h_\Sigma( t):=\sum_i\left|\Sigma_i\right|(t-1)^{d-i}
\end{equation}
is called the \emph{h-polynomial} of $\Sigma$. It is easy to see that h-polynomial of $\Sigma$ contains the same information as in the face numbers $f_i(\Sigma):=\left|\Sigma_i\right|$ of $\Sigma$.

For any graded $\mathcal{A}$-module $M$, we denote $\overline{M}:=M / \mathfrak{m} M=M \otimes_\mathcal{A} \mathcal{A} / \mathfrak{m}$, the quotient by the maximal ideal $\mathfrak{m}$ generated by the global linear functions $\mathcal{A}_1$. Since any complete simplicial fan $\Sigma$ gives a triangulation of a sphere (essentially by definition), by Reisner's criterion \cite[Theorem 5.53]{miller2005combinatorial}, $\mathcal{A}(\Sigma)$ is \emph{Cohen--Macaulay}. Hence $\mathcal{A}(\Sigma)$ is a free $\mathcal{A}$-module for the complete simplicial fan $\Sigma$ \cite[Theorem 13.37]{miller2005combinatorial} and we have the $\mathcal{A}$-module isomorphism 
\begin{equation} \label{eq-free}
  \mathcal{A}(\Sigma)\cong \mathcal{A}\otimes_{\mathbb{R}}\overline{\mathcal{A}(\Sigma)}.  
\end{equation}

In the language of commutative algebra, $\overline{\mathcal{A}(\Sigma)}$ is an \emph{Artinian reduction} of $\mathcal{A}(\Sigma)$---the quotient of $\mathcal{A}(\Sigma)$ by a specific \emph{linear system of parameters} of degree 1. Let the rays of the fan $\Sigma$ be $\rho_1,\dots,\rho_n$ with a chosen ray generators $v_i \in V$. Let $V^*$ be the dual space of linear functionals on V. There is an natural injective linear map
\begin{equation}\label{eq-injective}
    \Psi: V^* \longrightarrow \mathcal{A}(\Sigma)_1, \qquad f \longmapsto \sum_{i=1}^n f(v_i)\,x_i .
\end{equation}
The image $U = \Psi(V^*)$ is a d-dimensional subspace of $\mathcal{A}(\Sigma)_1$. Pick a basis $\theta_1,\dots,\theta_d$ of $V^*$. Because the fan is complete and simplicial, $\Psi(\theta_1), \dots,\Psi(\theta_d)$ form a regular sequence for $\mathcal{A}(\Sigma)$ (cf. \cite[eq. (1.2), p.250]{stembridge1994some}, known as the toric linear system of parameters. It follows that 
$\overline{\mathcal{A}(\Sigma)}$ is isomorphic to the Artinian reduction \begin{equation} \label{eq-toric}
    \overline{\mathcal{A}(\Sigma)}\cong\mathcal{A}(\Sigma)/\left(\Psi(\theta_1), \dots,\Psi(\theta_d)\right).
\end{equation}

Note that $\overline{\mathcal{A}(\Sigma)}$ has a induced graded ring structure from $\mathcal{A}(\Sigma)$, hence it is an $\mathbb{R}$-graded algebra.
A basis of $\mathcal{A}(\Sigma)$ over $\mathcal{A}$ is obtained by lifting an $\mathbb{R}$-basis of $\overline{\mathcal{A}(\Sigma)}$. 
 Since the Hilbert series of $\mathcal{A}$ is $(1-t)^{-d}$, this gives
$$
\operatorname{Hilb}(\overline{\mathcal{A}(\Sigma)}, t)=h_\Sigma( t).
$$ In particular, the h-polynomial $h_\Sigma( t)$ of the complete simplicial fan $\Sigma$ is non-negative. 

This non-negativity can also be derived from toric geometry. By small deformations of the ray\footnote{This deformation may break the symmetries of the fan!}, a simplicial fan $\Sigma$ can be made rational with respect to a fixed lattice in $V$ without changing
its combinatorial type. So we may assume $\Sigma$ is rational when we are only interested in its Stanley--Reisner ring and h-polynomial. For a rational complete simplicial fan $\Sigma$, one can construct the associated toric variety $X_\Sigma$. Then the ring $\mathcal{A}(\Sigma)$ is actually isomorphic to the torus-equivariant cohomology of $X_\Sigma$, the ring $\mathcal{A}$ is  isomorphic to the torus-equivariant cohomology of a point, and the ring homomorphism $\mathcal{A} \rightarrow \mathcal{A}(\Sigma)$ is induce on the equivariant cohomology by a map from $X_\Sigma$ to a point. The freeness of $\mathcal{A}(\Sigma)$ as an $\mathcal{A}$-module comes from the equivariant formality of the torus action on $X_\Sigma$. One can show that $h_\Sigma( t)$ is actually equal to the Poincare polynomial of the usual cohomology of $X_\Sigma$ with real coefficients, which is isomorphic to $\overline{\mathcal{A}(\Sigma)}$. Since $\Sigma$ is a complete simplicial fan, $X_\Sigma$ is known to be a complete toric variety which is rationally smooth, hence the usual cohomology of $X_\Sigma$ with rational coefficients satisfies the Poincare duality. Passing to the Poincare polynomial level, the duality implies that $h_\Sigma( t)$ is palindromic for a complete simplicial fan $\Sigma$, a fact known as the \emph{Dehn--Sommerville equations}. A complete characterization of $h_\Sigma( t)$ for a complete simplicial fan $\Sigma$ (or more generally, for a simplicial sphere), including the unimodality of $h_\Sigma( t)$, is known as the celebrated \emph{g-conjecture}, formulated by McMullen in 1971 \cite{mcmullen1971numbers}. For a simplicial polytope $P$, the g-conjecture is proved in 1980 by Billera and Lee (sufficiency)\cite{billera1981proof} and Stanley (necessity)\cite{Stanley80}. The unimodality of $h_{\Sigma_P}( t)$ follows from the hard Lefschetz theorem for $\overline{\mathcal{A}(\Sigma)}$, which holds because $X_{\Sigma_P}$ is projective for a polytopal fan $\Sigma_P$. McMullen\cite{mcmullen1993simple} later gave a proof of the hard Lefschetz theorem for $\overline{\mathcal{A}(\Sigma_P)}$ which did not involve toric varieties and so worked for a non-rational polytope $P$. For a complete simplicial fan $\Sigma$ (or more generally, for a simplicial sphere), the difficulty to prove the g-conjecture is the lack of ``ample'' elements when one tries to establish the hard Lefschetz theorem for $\overline{\mathcal{A}(\Sigma)}$, see for example, \cite{adiprasito2018combinatorial} and \cite{karu2023anisotropy}.

\subsection{Group actions on fans and Stanley--Reisner rings}

Now suppose $\Sigma$ is a fan in $V$ which is invariant under the linear action of a finite group $G$ on $V$. Hence the action of $G$ maps cones to cones and induces an action of $G$ on the face poset $\mathcal{F}(\Sigma)$. For any $\sigma \in \mathcal{F}(\Sigma)$, let $G_\sigma=\{g \in G: g \cdot \sigma=\sigma\}$ be the stabilizer of $\sigma$, which is also the setwise stabilizer of $\sigma$ as a cone under the action of $g$ on $V$.  An important class of actions are defined in \cite{stembridge1994some}: the action of $G$ on $\Sigma$ is \emph{proper} if for any $g \in G$, $g$ fixes a cone $\sigma$ setwise, then it must fix $\sigma$ pointwise. In terms of the face poset, this condition implies that for every $\sigma \in \mathcal{F}(\Sigma)$ and $g \in G$, $g\cdot\sigma=\sigma$, then $g\cdot\tau=\tau$ for $\tau\leq\sigma$.
For any $\sigma \in \mathcal{F}(\Sigma)$, let $V_\sigma$ be the linear span of the cone $\sigma$ in $V$, then we have an induced quotient representation $\psi_{\sigma}: G_{\sigma} \rightarrow \mathrm{GL}\left(V/ V_\sigma\right)$. 

Suppose further that the $G$-invariant fan $\Sigma$ is complete and simplicial. Then it is easy to see that the $G$-actions on $V$ and $\mathcal{F}(\Sigma)$ induce a graded $G$-representation on the Stanley--Reisner ring $\mathcal{A}(\Sigma)$. It is not hard to see that the span of the toric linear system of parameters in \eqref{eq-toric} is $G$-invariant, see for example, \cite[p.250]{stembridge1994some}. Hence the Artinian reduction $\overline{\mathcal{A}(\Sigma)}$ is a graded representation of $G$. The isomorphism in \eqref{eq-free} becomes an isomorphism of graded $G$-modules (see \cite[p.251, eq. (2.4)]{stembridge1994some}):
\begin{equation} \label{eq-Giso}
  \mathcal{A}(\Sigma)\cong \mathcal{A}\otimes_{\mathbb{R}}\overline{\mathcal{A}(\Sigma)}.  
\end{equation}

\section{Proof of Theorem \ref{thm-main}} \label{sec-proof}
We need the following well-known lemma, see for example, \cite[Chapter 5, Section 5, No.3, Lemma 2]{Bourbaki-Lie456}.

\begin{lemma} \label{lem-sym}
If $V$ is a $n$-dimensional vector space and $g \in \text{End}(V)$ is a linear transformation, then graded character of $g$ on the symmetric algebra $Sym(V)$ is given by 

$$\sum_{k=0}^{\infty} \text{Tr}(g|_{Sym^k(V)}) t^k = \frac{1}{\det(I - tg)}.$$
\end{lemma}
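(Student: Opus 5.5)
This is the standard trace formula for the symmetric algebra, so the plan is to reduce it to a computation over an eigenbasis and then pass to general $g$. First, extend scalars to $\mathbb{C}$. Neither side changes: the trace of $g$ on $Sym^k(V)$ is the same after complexification, and so is $\det(I-tg)$. Both sides are read as formal power series in $t$, where $1/\det(I-tg)$ makes sense because $\det(I-tg)$ has constant term $1$.

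Next, treat the diagonalizable case. Suppose $g$ has an eigenbasis $e_1,\dots,e_n$ with eigenvalues $\lambda_1,\dots,\lambda_n$. The monomials $e_1^{a_1}\cdots e_n^{a_n}$ with $\sum a_i = k$ form a basis of $Sym^k(V)$. Since $g$ acts on $Sym(V)$ as an algebra endomorphism, each of these monomials is an eigenvector with eigenvalue $\lambda_1^{a_1}\cdots\lambda_n^{a_n}$. Summing over all monomials gives
\[
\sum_{k\ge 0}\operatorname{Tr}(g|_{Sym^k(V)})\,t^k=\prod_{i=1}^n\sum_{a\ge 0}(\lambda_i t)^a=\prod_{i=1}^n\frac{1}{1-\lambda_i t}=\frac{1}{\det(I-tg)}.
\]

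For an arbitrary $g$, I would choose a basis in which $g$ is upper triangular, with diagonal entries $\lambda_i$. Then order the degree-$k$ monomials by a suitable monomial order, for instance reverse lexicographic compatible with the flag. In that order the matrix of $g$ on $Sym^k(V)$ is triangular and its diagonal entries are still $\prod\lambda_i^{a_i}$, so the trace computation above goes through unchanged. Alternatively, one can argue by density: for each fixed $k$, the $t^k$-coefficient on each side is a polynomial function of the entries of $g$, and diagonalizable matrices are Zariski dense in $\text{End}(V)$. Since the two sides agree on diagonalizable $g$, they agree for all $g$.

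The main obstacle is only this non-diagonalizable case. In the paper's applications it does not even arise, because $g$ comes from a finite group and is therefore diagonalizable over $\mathbb{C}$. So the density argument is the step I would write out carefully, and only for completeness.
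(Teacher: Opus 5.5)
Your proof is correct: the eigenbasis computation is right, and either of your routes handles non-diagonalizable $g$. In the first route, if $g e_j \in \lambda_j e_j + \operatorname{span}(e_1,\dots,e_{j-1})$, then any monomial order with $e_1 > \cdots > e_n$ makes $g$ act triangularly on $\mathrm{Sym}^k(V)$ with diagonal entries $\prod_i \lambda_i^{a_i}$; the Zariski-density argument is equally valid. The paper gives no proof of its own and only cites Bourbaki for this standard fact, and the triangularization over an algebraically closed field that you describe is the usual way it is proved.
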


We have the following equivariant version of the Dehn--Sommerville equations, which states that the graded character $
P_G(t, g)\in \mathbb{C}[t]$ of any $g \in G$ on ${\overline{\mathcal{A}(\Sigma)}}$ is palindromic. 

\begin{proposition}
Suppose a finite group $G$ acts linearly on $V$ preserving a complete simplicial fan $\Sigma$. Then for any $g \in G$, the graded character $
P_G(t, g)\in \mathbb{C}[t]$, defined in \eqref{eq-def of graded cha}, satisfies 
\begin{equation} \label{eq-PD}
 P_G(t, g)=t^d P_G(t^{-1}, g)   
\end{equation}
where $d=\operatorname{deg}P_G(t, g)$ equals to the dimension of $V$.
\end{proposition}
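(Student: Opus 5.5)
The approach is to deduce \eqref{eq-PD} from an equivariant Poincar\'e duality on $\overline{\mathcal{A}(\Sigma)}$. Write $\overline{\mathcal{A}(\Sigma)}^i$ for the degree-$i$ piece. The key steps are: (a) $\overline{\mathcal{A}(\Sigma)}^i=0$ for $i>d$ and $\overline{\mathcal{A}(\Sigma)}^d\cong\mathbb{R}$; (b) $G$ acts trivially on $\overline{\mathcal{A}(\Sigma)}^d$; (c) multiplication gives a perfect, $G$-invariant pairing $\overline{\mathcal{A}(\Sigma)}^i\times\overline{\mathcal{A}(\Sigma)}^{d-i}\to\overline{\mathcal{A}(\Sigma)}^d\cong\mathbb{R}$. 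Granting these, $\overline{\mathcal{A}(\Sigma)}^{d-i}\cong(\overline{\mathcal{A}(\Sigma)}^i)^*$ as $G$-representations. Since these are real representations, $\operatorname{Trace}(g,(\overline{\mathcal{A}(\Sigma)}^i)^*)=\operatorname{Trace}(g^{-1},\overline{\mathcal{A}(\Sigma)}^i)=\overline{\operatorname{Trace}(g,\overline{\mathcal{A}(\Sigma)}^i)}=\operatorname{Trace}(g,\overline{\mathcal{A}(\Sigma)}^i)$. Comparing coefficients of $t^i$ and $t^{d-i}$ gives \eqref{eq-PD}. The degree is exactly $d$ because $\dim\overline{\mathcal{A}(\Sigma)}^d=h_\Sigma$'s leading coefficient, which is $|\Sigma_0|=1$ by \eqref{eq-h-poly}. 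That coefficient equals $\operatorname{Trace}(g,\cdot)$ on a trivial line, so it is $1$ for every $g$.

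First I will normalize the ray generators $G$-equivariantly. Pick a generator for one ray in each $G$-orbit and transport it by $G$. This is well defined: if $g\rho=\rho$, then $gv=cv$ with $c>0$, and $g$ has finite order, so $c=1$. With this choice $g$ permutes the $v_i$, and hence permutes the variables $x_\rho$ (acting by $f\mapsto f\circ g^{-1}$). In particular $g$ sends square-free monomials $x_\sigma$ to $x_{g\sigma}$ with coefficient exactly $1$.

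Non-equivariantly, steps (a) and (c) are the classical fact that $\overline{\mathcal{A}(\Sigma)}$ is a Poincar\'e duality (Gorenstein) algebra. I will quote this from Stanley's theory for simplicial spheres, via \cite{miller2005combinatorial}, since $\Sigma$ triangulates the sphere. Alternatively, one can use Brion's explicit degree map $\deg:\overline{\mathcal{A}(\Sigma)}^d\to\mathbb{R}$, $x_\sigma\mapsto 1/|\det(v_{\sigma})|$ for $\sigma\in\Sigma_d$. Here $\det(v_\sigma)$ is the determinant of the ray generators of $\sigma$ with respect to a fixed volume form on $V$. The pairing is then automatically $G$-invariant once $\deg$ is, because $G$ acts by ring automorphisms preserving the toric linear system of parameters.

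Step (b) is the main obstacle. The point is that the possible character on the top degree, $\det$ or a sign, must be shown to be trivial. I will argue via the degree map. Average the inner product so that $G\subset O(V)$, whence $|\det g|=1$. Then $|\det(gv_\sigma)|=|\det g|\,|\det(v_\sigma)|=|\det(v_\sigma)|$. Since $g\cdot x_\sigma=x_{g\sigma}$ and the generators of $g\sigma$ are the $gv_i$, we get $\deg(g\cdot x_\sigma)=\deg(x_\sigma)$. The $x_\sigma$ for $\sigma\in\Sigma_d$ span the top degree, so $G$ acts trivially there. The delicate point to verify is that the top class really is positive on every $x_\sigma$, with no orientation sign, in the non-rational setting. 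I would confirm this by checking the relations $\Psi(\theta)x_\tau=0$ for $\tau\in\Sigma_{d-1}$ across adjacent maximal cones, which is exactly where the absolute value, as opposed to a signed determinant, is forced.
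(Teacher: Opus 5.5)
Your proposal is correct and follows essentially the same route as the paper. Both use Gorenstein/Poincar\'e duality for the $G$-equivariant pairing, then show $G$ is trivial on the one-dimensional top degree via the normalization $x_\sigma \mapsto 1/|\det(v_\sigma)|$, and finish with self-duality of real representations. Your degree map is the dual formulation of the paper's canonical generator $\omega = x_\sigma/|\det(v_\sigma)|$, and the adjacency check with $\Psi(\alpha)x_\tau=0$ that you defer is exactly the computation the paper carries out. Your orbit-transport normalization of the ray generators and your explicit check that the $t^d$-coefficient equals $1$ for every $g$ are slightly more careful than the paper's corresponding remarks, but they do not change the argument.
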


\begin{proof}
    Because $\Sigma$ is a complete simplicial fan, the Stanley--Reisner ring $\mathcal{A}(\Sigma)$ is a Gorenstein ring \cite[Exercise 13.12]{miller2005combinatorial}. Consequently, the Artinian reduction $\overline{\mathcal{A}(\Sigma)}$ is a Poincaré duality algebra with a 1-dimensional socle in top degree $d$, see for example, \cite{brion1997structure}. That is, there exists a non-degenerate pairing for any $0\leq k \leq d$
\begin{equation} \label{eq-PP}
\overline{\mathcal{A}(\Sigma)}^k \times \overline{\mathcal{A}(\Sigma)}^{d-k} \to \overline{\mathcal{A}(\Sigma)}^d \cong \mathbb{R}.
\end{equation}
The above pairing is induced by multiplication in $\overline{\mathcal{A}(\Sigma)}$. When $G$ acts linearly on $V$ preserving the fan $\Sigma$, the non-degenerate pairing \eqref{eq-PD} is preserved by the $G$-action. 

It remains to prove that $G$ acts on $\overline{\mathcal{A}(\Sigma)}^d$ always trivially. Thus, the complementary degree pieces of the graded representation $\overline{\mathcal{A}(\Sigma)}$ are dual to each other. Since all the graded pieces are real representations, the complementary degree pieces are isomorphic to each other and the graded character $
P_G(t, g)$ for any $g \in G$ is palindromic.  

We prove $G$ acts on $\overline{\mathcal{A}(\Sigma)}^d$ by tracking the linear relations between adjacent maximal cones to build a globally $G$-invariant volume element. 

Because $G$ is a finite group acting linearly and preserving the fan, it permutes the rays. By choosing a $G$-invariant inner product and pick a unit vector $v_\rho$ for one ray $\rho$, this ensures $G$ permutes the ray generators $\{v_\rho\}$.

For any maximal cone $\sigma \in \Sigma_d$, let $x_\sigma = \prod_{\rho \in \sigma(1)} x_\rho$ be the square-free monomial of degree $d$. The top degree $\overline{\mathcal{A}(\Sigma)}^d$ is spanned by these $x_\sigma$ elements, subject to the relations in \eqref{eq-toric}.

Let $\sigma_1$ and $\sigma_2$ be two adjacent maximal cones sharing a facet $\tau$.
Let $\tau$ be generated by the rays $\{\rho_1, \dots, \rho_{d-1}\}$.
Let the remaining ray of $\sigma_1$ be $\rho_d$, and the remaining ray of $\sigma_2$ be $\rho'_{d}$.

Choose a non-zero linear functional $\alpha \in V^*$ that annihilates the hyperplane spanned by $\tau$. Thus, $\alpha(v_{\rho_1}) = \dots = \alpha(v_{\rho_{d-1}}) = 0$.
The linear system of parameters element for $\alpha$ must evaluate to $0$ in $\overline{\mathcal{A}(\Sigma)}$:
$$ \sum_{\rho \in \Sigma(1)} \alpha(v_\rho) x_\rho = 0.$$

Multiply this relation by the degree $(d-1)$ monomial $x_\tau = x_1 \cdots x_{d-1}$.
Because of the Stanley–Reisner ideal, any product of rays that do not form a cone in $\Sigma$ is $0$. The only rays that can form a cone with $\tau$ are $\rho_d$ and $\rho'_{d}$. Therefore, all terms in the sum vanish except two:
\begin{equation}\label{eq-adj}
 \alpha(v_{\rho_{d}}) x_{\sigma_1} + \alpha(v_{\rho'_{d}}) x_{\sigma_2} = 0.   
\end{equation}

Let us rewrite the coefficients $\alpha(v_{\rho_{d}})$ and $\alpha(v_{\rho'_{d}}) $ using determinants.
We can identify $\alpha(v)$ with the determinant form $\det(v_{\rho_1}, \dots, v_{\rho_{d-1}}, v)$ (up to a global non-zero scalar).
Therefore:
\begin{equation*}
 \begin{aligned}
  \alpha(v_{\rho_{d}}) = \det(v_{\rho_1}, \dots, v_{\rho_{d-1}}, v_{\rho_{d}}) = \det(v_{\sigma_1}),
\\
\alpha(v_{\rho'_{d}}) = \det(v_{\rho_1}, \dots, v_{\rho_{d-1}}, v_{\rho'_{d}}) = \det(v_{\sigma_2}).
 \end{aligned}   
\end{equation*}

Because $\Sigma$ is a fan, the vectors $v_{\rho_{d}}$ and $v_{\rho'_{d}}$ must lie on opposite sides of the hyperplane spanned by $\tau$. This means their determinants must have opposite signs:
$$ \operatorname{sgn}(\det(v_{\sigma_1})) = - \operatorname{sgn}(\det(v_{\sigma_2})). $$

Substituting the determinants into our adjacency relation \eqref{eq-adj} gives:
$$ \det(v_{\sigma_1}) x_{\sigma_1} + \det(v_{\sigma_2})x_{\sigma_2} = 0. $$

Dividing by the determinants and absorbing the negative sign via the absolute value, we get:
$$ \frac{x_{\sigma_1}}{|\det(v_{\sigma_1})|} = \frac{x_{\sigma_2}}{|\det(v_{\sigma_2})|}. $$

Since the fan $\Sigma$ is complete, any two maximal cones are connected by a sequence of adjacent facets. This implies that the element
$$ \omega = \frac{x_\sigma}{|\det(v_\sigma)|} $$
is globally defined in $\overline{\mathcal{A}(\Sigma)}^d$, independent of the choice of maximal cone $\sigma$. Since it is non-zero and $\overline{\mathcal{A}(\Sigma)}^d$ is 1-dimensional, $\omega$ forms a canonical basis for $\overline{\mathcal{A}(\Sigma)}^d$.

We now test the action of an arbitrary element $g \in G$ on our basis element $\omega$:
$$ g \cdot \omega = g \cdot \left( \frac{x_\sigma}{|\det(v_\sigma)|} \right) = \frac{x_{g\sigma}}{|\det(v_\sigma)|}.$$

Because $g$ is a linear operator acting on the vectors generating $\sigma$ and $g$ permutes the ray generators $v_{\rho}$ by our choice, the determinants satisfies:
$$ \det(v_{g\sigma}) = \det(g) \det(v_\sigma). $$

Taking the absolute value on both sides:
$$ |\det(u_{g\sigma})| = |\det(g)| |\det(u_\sigma)| $$

Since $G$ is a finite group acting on a real vector space, the eigenvalues of $g$ are roots of unity, meaning its determinant must be $\pm 1$. Therefore, we have
$$ |\det(v_{g\sigma})| = |\det(v_\sigma)|.$$

This implies
$$ g \cdot \omega = \frac{x_{g\sigma}}{|\det(v_{g\sigma})|} = \omega. $$

Because $g \cdot \omega = \omega$ for all $g \in G$, the group acts trivially on the basis element $\omega$. Thus, the action of $G$ on the top degree of the Artinian reduction is identically the trivial representation. This completes the proof of the proposition.
\end{proof}

We are now in a position to give the
\begin{proof}[Proof of Theorem \ref{thm-main}]
Our strategy relies on three main steps: decomposing the Stanley--Reisner ring $\mathcal{A}(\Sigma)$ into $G$-invariant pieces, and then passing to the augmented quotient $\overline{\mathcal{A}(\Sigma)}$ using \eqref{eq-Giso}, 
and applying equivariant Poincaré duality (the algebraic manifestation of the Dehn-Sommerville equations) to align the polynomial degrees.

We want to evaluate the graded character (trace) of $g \in G$. Since $G$ permutes the cones in $\Sigma$, $g$ maps the summand $\mathcal{A}(\Sigma)_\sigma$ in \eqref{eq-decom of SR} to $\mathcal{A}(\Sigma)_{g\sigma}$. Thus, the only non-zero trace contributions come from the set of cones fixed setwisely by $g$, denoted $\Sigma^g$. 

For a invariant cone $\sigma \in \Sigma^g$, $g$ permutes its extreme rays. Because $\Sigma$ is a simplicial fan, the rays of $\sigma$ are linearly independent and form a basis for $V_\sigma$. Therefore, the permutation representation of $g$ on the variables $\{x_i \mid \rho_i \in \sigma(1)\}$ is isomorphic to the linear action of $g$ on $V_\sigma$. 

The generator $x_\sigma$ in \eqref{eq-summand of SR} is fixed by $g$ (since permutations leave the product of all variables invariant) and contributes $t^{\dim \sigma}$. The symmetric algebra $\mathbb{C}[x_{\rho_i} \mid \rho_i \in \sigma(1)]$ in \eqref{eq-summand of SR} contributes $1 / \operatorname{det}(I - t g_{V_\sigma})$ by Lemma \ref{lem-sym}. Summing over all invariant cones yields the graded character of $g \in G$ for the full Stanley-Reisner ring:
$$\operatorname{tr}(g \mid \mathcal{A}(\Sigma), t) = \sum_{\sigma \in \Sigma^g} \frac{t^{\dim \sigma}}{\operatorname{det}(I - t g_{V_\sigma})}.$$

Consider the isomorphism of graded $G$-modules in \eqref{eq-Giso}:
$$\mathcal{A}(\Sigma)\cong \mathcal{A}\otimes_{\mathbb{R}}\overline{\mathcal{A}(\Sigma)}.$$
Since the base field is $\mathbb{R}$, by Lemma \ref{lem-sym}, the graded character of $\mathcal{A}\cong\operatorname{Sym}(V^*)$ is $1 / \operatorname{det}(I - t g_{V^*}) = 1 / \operatorname{det}(I - t g_V)$  . Taking graded characters on both sides, we get the graded character of the Artinian reduction:
\begin{equation} \label{eq-cha from Giso}
  P_G(t, g) = \operatorname{det}(I - t g_V) \sum_{\sigma \in \Sigma^g} \frac{t^{\dim \sigma}}{\operatorname{det}(I - t g_{V_\sigma})}. 
\end{equation}

By Maschke's theorem, $V \cong V_\sigma \oplus (V/V_\sigma)$ as $g$-modules, which implies $\operatorname{det}(I - t g_V) = \operatorname{det}(I - t g_{V_\sigma}) \operatorname{det}(I - t g_{V/V_\sigma})$. This simplifies \eqref{eq-cha from Giso} to
\begin{equation} \label{eq-cha over inva cones}
P_G(t, g) = \sum_{\sigma \in \Sigma^g} t^{\dim \sigma} \operatorname{det}(I - t g_{V/V_\sigma}).
\end{equation} 

For an operator $A$ of dimension $k$, standard linear algebra dictates that $t^k \operatorname{det}(I - t^{-1}A) = \operatorname{det}(tI - A)$. Let's apply the palindromic property \eqref{eq-PD} to our sum \eqref{eq-cha over inva cones}:
\begin{equation} \label{eq-reverse}
    \begin{aligned}
P_G(t, g)=t^d P_G(t^{-1}, g) &= t^d \sum_{\sigma \in \Sigma^g} t^{-\dim \sigma} \operatorname{det}(I - t^{-1} g_{V/V_\sigma}) \\
&= \sum_{\sigma \in \Sigma^g} t^{d - \dim \sigma} \operatorname{det}(I - t^{-1} g_{V/V_\sigma}) \\
&= \sum_{\sigma \in \Sigma^g} \operatorname{det}(t I - g_{V/V_\sigma}),
\end{aligned} 
\end{equation}
where the last equality holds because $g_{V/V_\sigma}$ operates on a space of dimension $d - \dim(\sigma)$. 

We have established the character evaluated at a specific $g \in G$ as a sum over its fixed cones. The target formula \eqref{eq-general} is expressed globally using induced representations. Let's evaluate the target formula at $g$:
\begin{equation} \label{eq-eva at g}
\operatorname{R}_{\overline{\mathcal{A}(\Sigma)}}(t)(g) = \sum_{\sigma \in \Sigma} \frac{|G_\sigma|}{|G|} \operatorname{Ind}_{G_\sigma}^G \left(\operatorname{det}(tI-\psi_\sigma)\right)(g).    
\end{equation}  

Recall the Frobenius's formula for a general induced character: $$\operatorname{Ind}_{H}^G(\chi)(g) = \frac{1}{|H|} \sum_{h \in G, \ h^{-1}gh \in H} \chi(h^{-1}gh),$$
where $\chi$ is any character of a subgroup $H\subset G$.
Substituting this into the sum \eqref{eq-eva at g}, we have
\begin{equation} \label{eq-double}
\operatorname{R}_{\overline{\mathcal{A}(\Sigma)}}(t)(g) = \frac{1}{|G|} \sum_{\sigma \in \Sigma} \sum_{\substack{h \in G \\ h^{-1}gh \in G_\sigma}} \operatorname{det}(tI - \psi_\sigma(h^{-1}gh)_{V/V_\sigma})   
\end{equation}

Let $\tau = h\sigma$. The condition $h^{-1}gh \in G_\sigma$ means $gh\sigma = h\sigma$, which is exactly $g\tau = \tau$, meaning $\tau \in \Sigma^g$. Furthermore, the conjugation by $h$ intertwines the representation on $V/V_\sigma$ with $V/V_\tau$, so $\operatorname{det}(tI - \psi_\sigma(h^{-1}gh)_{V/V_\sigma}) = \operatorname{det}(tI - \psi_\sigma(g)_{V/V_\tau})$. Hence the double sum in \eqref{eq-double} re-indexes over all $h \in G$ and all $\tau \in \Sigma^g$:
$$\operatorname{R}_{\overline{\mathcal{A}(\Sigma)}}(t)(g) = \frac{1}{|G|} \sum_{h \in G} \sum_{\tau \in \Sigma^g} \operatorname{det}(tI - g_{V/V_\tau}) = \sum_{\tau \in \Sigma^g} \operatorname{det}(tI - g_{V/V_\tau}).$$
This precisely matches \eqref{eq-reverse}, which complete the proof of Theorem \ref{thm-main}.
\end{proof}

\section{Reflection symmetries and Hybrid fans} \label{sec-ref}
We use \cite{Humphreys90} as our main reference in this section.

Let $W$ be a finite group acting on $V$ linearly and preserving a complete simplicial fan $\Sigma$ in $V$. In this section, we are interested in actions generated by reflections and we want to study the induced representations on the Stanley--Reisner ring $\mathcal{A}(\Sigma)$ and the Artinian reduction $\overline{\mathcal{A}(\Sigma)}$. Since the representations are induced by the action of $W$ on $V$, by passing to the image of the action morphism, we can assume without loss of generality that $W$ is a \emph{finite reflection group} in $V$. That is, $W$ is a finite subgroup of $\operatorname{GL(V)}$ generated by reflections of the form 
$$
s_\alpha(v):=v-2 \frac{\langle v, \alpha\rangle}{\langle\alpha, \alpha\rangle} \alpha
$$ for some nonzero $\alpha \in V$. The action of $W$ on $V$ preserves the inner product $\langle-,-\rangle$.

From the finite reflection group $W$, one can construct (not uniquely) a \emph{root system} $\Phi \subset V$ in the sense of \cite[Chapter 1]{Humphreys90}. This means that $\Phi$  is a finite set of non-zero vectors in $V$ (these vectors are called \emph{roots}) such that for all $\alpha \in \Phi$, the following two conditions hold\footnote{The notion of ``root system'' here differs from that in Lie theory: we don't assume that the roots span $V$ nor we need the crystallographic condition.}:
\begin{equation*}
\begin{aligned}
     &(i)\quad\phi \cap \mathbb{R}\langle\alpha\rangle=\{\alpha,-\alpha\}; \\
&(ii)\quad s_\alpha(\Phi)=\Phi. 
\end{aligned}
\end{equation*}

We make a further assumption on $\Phi$ for later convenience: we assume $\Phi$ consists of unit vectors. With this assumption, our group $W$ is the group generated by all 
reflections $s_\alpha$ for $\alpha \in \Phi$ of the form 
\begin{equation} \label{eq-reflection}
s_\alpha(v):=v-2 \langle v, \alpha\rangle \alpha
\end{equation}
and it is known that the reflections $s_\alpha$ ($\alpha \in \Phi$) exhaust all reflections in $W$ \cite[Proposition 1.14]{Humphreys90}. Fix a such root system $\Phi$ for our group $W$.

Any root system $\Phi$ contains a \emph{simple system} $\Delta$ (whose elements are called \emph{simple roots}), which is a vector space basis for the $\mathbb{R}$-span of $\Phi$ in $V$ and every element of $\Phi$ is a real linear combination of elements in $\Delta$ with coefficients all of the same sign. For the root system $\Phi$ of the group $W$, fix a simple system $\Delta=\left\{\alpha_1, \ldots, \alpha_r\right\} \subset \Phi$. Then $W$ is generated by the reflections $s_i:=s_{\alpha_i}$ for $\alpha_i \in \Delta$, called \emph{simple reflections}. The group $W_J$ generated by a subset $J \subset \Delta$ is called a \emph{standard parabolic subgroup} of W with respect to the simple system $\Delta$. Any subgroup of $W$ conjugate to a standard parabolic subgroup is called a \emph{parabolic subgroup} of $W$.

Consider the open convex cone
$$
C:=\{v \in V:\langle v, \alpha\rangle > 0, \alpha \in \Delta\},
$$
known as the \emph{dominant chamber} of $W$ in $V$. The closure 
$$
D=\{v \in V:\langle v, \alpha\rangle \geq 0, \alpha \in \Delta\}
$$ is known to be a \emph{fundamental domain} for the action of $W$ on $V$ (that is, each $W$-orbit intersects $D$ in exactly one point).

Suppose $\Sigma$ is a fan in $V$ which is invariant under the reflection action of $W$ on $V$. We have the following proposition.

\begin{proposition} \label{prop-unique}
Let $W$ be a finite reflection group acting on $V$ preserving a fan $\Sigma$ in $V$, then in every $W$-orbit of cones in $\Sigma$, there is exactly one cone $\sigma$ whose relative interior, denoted by $\operatorname{relint}(\sigma)$, intersects the fundamental domain $D$.
\end{proposition}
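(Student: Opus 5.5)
Existence comes straight from the fundamental-domain property; uniqueness uses the stabilizer structure of points of $D$ together with the fact that the relative interiors of the cones of $\Sigma$ are pairwise disjoint.

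\emph{Existence.} Fix a $W$-orbit of cones and a cone $\sigma$ in it, and pick any point $v\in\operatorname{relint}(\sigma)$. Since $D$ is a fundamental domain, there is $w\in W$ with $wv\in D$. The element $w$ acts linearly and preserves $\Sigma$, so it maps $\sigma$ onto the cone $w\sigma\in\Sigma$ and maps relative interior onto relative interior. Hence $wv\in\operatorname{relint}(w\sigma)\cap D$, and $w\sigma$ lies in the given orbit.

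\emph{Uniqueness.} Suppose $\sigma$ and $w\sigma$ both have relative interiors meeting $D$, say at $u\in\operatorname{relint}(\sigma)\cap D$ and $u'\in\operatorname{relint}(w\sigma)\cap D$. Then $w^{-1}u'\in\operatorname{relint}(\sigma)$.
\begin{itemize}
\item In general $w^{-1}u'\neq u$, and $w^{-1}u'$ need not lie in $D$, so the fundamental-domain property cannot be applied to these two points directly.
\item To get around this, use the convexity of $\operatorname{relint}(\sigma)$. The segment from $u$ to $w^{-1}u'$ lies in $\operatorname{relint}(\sigma)$, and applying $w$ sends it to a segment in $\operatorname{relint}(w\sigma)$.
\item The key input is the standard fact from \cite{Humphreys90}: for $x\in D$, the stabilizer $W_x$ is the parabolic subgroup generated by the simple reflections fixing $x$. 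Equivalently, if $x\in D$ and $wx\in D$, then $wx=x$ and $w\in W_x$.
\end{itemize}

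The cleanest route is the following. The sets $\operatorname{relint}(\tau)$, $\tau\in\Sigma$, are pairwise disjoint. So if I can find a single point $x\in\operatorname{relint}(\sigma)\cap D$ with $wx\in D$, then the key fact gives $wx=x$. That point lies in both $\operatorname{relint}(\sigma)$ and $\operatorname{relint}(w\sigma)$, so disjointness forces $w\sigma=\sigma$.

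I see two ways to produce such an $x$.

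\emph{First route.} Consider the cone $\sigma\cap D$. Because $W$ preserves $\Sigma$, each reflecting hyperplane $H_\alpha$ either contains $\sigma$ or meets $\operatorname{relint}(\sigma)$ in a relatively open piece on which $\sigma$ is cut. One then shows that $\operatorname{relint}(\sigma)\cap D$ has $w$-translate $\operatorname{relint}(w\sigma)\cap D$, using that $D\cap\operatorname{relint}(\sigma)$ is determined by which closed chambers contain it.

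\emph{Simpler route, which I would try first.} Replace $w$ by a minimal-length element of its coset $wW_u$, where $W_u$ is the stabilizer of $u$. Then argue via reduced expressions: if $w\ne 1$, some simple reflection $s_i$ has $\ell(s_iw)<\ell(w)$. That $s_i$ sends $u'$ across $H_{\alpha_i}$, so $\langle u',\alpha_i\rangle=0$ and $s_iu'=u'$. This lets us replace $w\sigma$ by $s_iw\sigma$ and induct on length, showing $w\sigma=\sigma$.

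In fact the most robust version avoids choosing $x$ at all. The open chamber $C$ and its closure $D$ are unions of relative interiors of cones of the Coxeter fan, and each reflecting hyperplane $H_\alpha$ is a union of cones of $\Sigma$. Indeed, $s_\alpha$ preserves $\Sigma$, and the fixed set of $s_\alpha$ meets each cone $\tau$ in a face-compatible way when $s_\alpha\tau=\tau$.

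\emph{Main obstacle.} The hardest step is exactly this claim: any $W$-invariant fan is refined compatibly with the reflecting hyperplanes, so that $\operatorname{relint}(\sigma)$ lies in a single open face of the Coxeter arrangement. Given the claim, $\operatorname{relint}(\sigma)\cap D\ne\emptyset$ forces $\operatorname{relint}(\sigma)$ into that Coxeter face $F\subset D$, and likewise $\operatorname{relint}(w\sigma)\subset F'\subset D$ with $wF=F'$. Distinct faces of $D$ are not $W$-conjugate, so $F=F'$ and $w$ fixes $F$ pointwise. Then $\operatorname{relint}(w\sigma)\cap\operatorname{relint}(\sigma)\supset w(\operatorname{relint}\sigma\cap F)$ is nonempty, and disjointness of relative interiors gives $w\sigma=\sigma$.

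If the claim fails because $\sigma$ straddles a hyperplane, I would instead argue directly with $u$. Choose $w$ of minimal length among all elements mapping $\sigma$ to the second cone. Then use the length argument together with convexity of $\operatorname{relint}(\sigma)\cap\operatorname{relint}(w^{-1}\cdot)$ to get a contradiction.
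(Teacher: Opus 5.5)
Your reduction is sound: if some $x\in\operatorname{relint}(\sigma)\cap D$ has $wx\in D$, then $wx=x$ by the standard stabilizer fact, and disjointness of relative interiors gives $w\sigma=\sigma$. But none of your routes actually produces such a point. The route you lean on, the ``most robust version'', rests on a false claim: a $W$-invariant fan need not be compatible with the reflection arrangement. Let $W=\{1,s\}$ with $s(x,y)=(-x,y)$ on $\mathbb{R}^2$, so $\alpha=(1,0)$ and $D=\{x\ge 0\}$. Let $\Sigma$ be the complete fan with rays $(\pm1,\pm1)$. This fan is $W$-invariant, yet $\operatorname{relint}\operatorname{cone}((1,1),(-1,1))$ meets both open chambers, and $\{x=0\}$ is not a union of cones of $\Sigma$. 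Such straddling cones are exactly the non-proper situation the paper is built for. The ``simpler route'' fails on the same example. Take $\sigma=\operatorname{cone}((1,1),(-1,1))$, $u=u'=(1,2)$ and $w=s$. Then $W_u=\{1\}$ and $s$ is a left descent of $w$, but $\langle u',\alpha\rangle=1\neq 0$. So the step ``$\langle u',\alpha_i\rangle=0$'' is false. Without it you do not know that $s_iw\sigma$ equals $w\sigma$ or still has relative interior meeting $D$, so the induction does not run. The ``first route'' and the final fallback are only sketches.

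The missing idea is the paper's Lemma~\ref{lem-intersect}. If $\operatorname{relint}(\tau)$ meets a reflecting hyperplane $H_\alpha$ at $p$, then $p=s_\alpha p\in\operatorname{relint}(\tau)\cap\operatorname{relint}(s_\alpha\tau)$, so $s_\alpha\tau=\tau$. With this lemma your fallback closes. Let $\tau=w\sigma$ with $w$ of minimal length among the elements carrying $\sigma$ to $\tau$, and suppose $w\neq 1$. Choose a simple reflection $s_i$ with $\ell(s_iw)<\ell(w)$, so that $w^{-1}\alpha_i$ is a negative root. For $u\in\operatorname{relint}(\sigma)\cap D$ and $u'\in\operatorname{relint}(\tau)\cap D$ you have $wu\in\operatorname{relint}(\tau)$ and
\[
\langle wu,\alpha_i\rangle=\langle u,w^{-1}\alpha_i\rangle\le 0\le\langle u',\alpha_i\rangle .
\]
Hence the segment $[wu,u']\subset\operatorname{relint}(\tau)$ meets $H_{\alpha_i}$, and the lemma gives $s_i\tau=\tau$. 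Then $s_iw$ is a shorter element carrying $\sigma$ to $\tau$, a contradiction, so $w=1$. The paper uses the same two ingredients, convexity of the relative interior and Lemma~\ref{lem-intersect}, along the segment from $x$ to $w^{-1}y$ inside $\operatorname{relint}(\sigma)$. The length induction makes precise its informal step that the reflections in the walls crossed carry $w^{-1}y$ back into $D$.
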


\begin{proof}
Let $\mathcal{O}$ be an orbit of cones in the fan $\Sigma$. Pick any cone $\tau \in \mathcal{O}$ and an arbitrary point $v$ in its relative interior, $v \in \operatorname{relint}(\tau)$. By the properties of the fundamental domain, there exists an element $w \in W$ such that $wv \in D$. Since $\Sigma$ is $W$-invariant, the image $\sigma = w\tau$ is also a cone in the fan $\Sigma$. Since the action is linear, $wv \in \operatorname{relint}(w\tau) = \operatorname{relint}(\sigma)$. Therefore, $wv \in \operatorname{relint}(\sigma) \cap D$, proving that every orbit contains at least one cone whose relative interior intersects $D$.

The proof of the uniqueness needs the following lemma, whose geometric intuition is that the $W$-invariance of the fan forces any cone that ``crosses'' a reflection hyperplane to be symmetric with respect to that hyperplane. 

\begin{lemma} \label{lem-intersect}
Suppose $\Sigma$ is a fan in $V$ which is invariant under the reflection action of $W$ on $V$. If $\sigma \in \Sigma$ and there exists a point $x \in \operatorname{relint}(\sigma)$ that lies on a reflecting hyperplane $H_\alpha$, then $\sigma$ is invariant under the reflection $s_\alpha$ (i.e., $s_\alpha \sigma = \sigma$).   
\end{lemma}

\begin{proof}
    Let $x \in \operatorname{relint}(\sigma) \cap H_\alpha$. By the definition of a reflection, $s_\alpha x = x$. Thus, $x \in \operatorname{relint}(\sigma)$ and $x = s_\alpha x \in s_\alpha(\operatorname{relint}(\sigma)) = \operatorname{relint}(s_\alpha \sigma)$. Since $\Sigma$ is $W$-invariant, $s_\alpha \sigma$ is a cone in the fan. By the intersection property of a polyhedral fan, distinct cones can only intersect at their boundaries (their faces). If two cones in a fan have intersecting relative interiors, they must be the same cone. Thus, $s_\alpha \sigma = \sigma$.
\end{proof}

Now we prove the uniqueness of Proposition \ref{prop-unique}. Suppose there are two cones $\sigma, \tau$ in the same orbit such that the relative interior of both intersect $D$. Since they are in the same orbit, $\tau = w\sigma$ for some $w \in W$. We want to show that $\sigma = \tau$.

Let $x \in \operatorname{relint}(\sigma) \cap D$ and $y \in \operatorname{relint}(\tau) \cap D$. Since $y \in \tau = w\sigma$, we have $w^{-1}y \in \operatorname{relint}(\sigma)$. 
Consider the line segment $L$ in $V$ connecting $x$ to $w^{-1}y$. Since $\operatorname{relint}(\sigma)$ is a convex set, the entire segment $L$ is contained in $\operatorname{relint}(\sigma)$. 

Now, consider the path of the point $z(t)$ along $L$ from $x$ to $w^{-1}y$. 
At $t=0$, $z(0) = x \in D$. 
As $t$ increases, if $z(t)$ leaves $D$, it must cross a reflecting hyperplane $H_\alpha$ that forms a wall of $D$.
According to our Lemma \ref{lem-intersect}, if the relative interior of $\sigma$ intersects $H_\alpha$, then $s_\alpha \in W_\sigma$.

The point $w^{-1}y$ is reached by starting at $x \in D$ and potentially crossing several hyperplanes. Each such hyperplane $H_{\alpha}$ corresponds to a reflection $s_{\alpha}$ that is in the stabilizer of $\sigma$. Consequently, the entire subgroup generated by these reflections---which can map $w^{-1}y$ back into $D$---is contained in $W_\sigma$.

Recall that $y$ is the unique point in the orbit $W(w^{-1}y)$ that lies in $D$. There exists an element $g$ (composed of reflections $s_{\alpha}$ across the walls we crossed) such that $g(w^{-1}y) = y$ and $g \in W_\sigma$.
This implies
$$y = g(w^{-1}y) \in g(\operatorname{relint}(\sigma)) = \operatorname{relint}(\sigma).$$

We now have $y \in \operatorname{relint}(\sigma)$ and $y \in \operatorname{relint}(\tau)$. In a fan, this forces $\sigma = \tau$. Thus, the desired cone is unique.
\end{proof}

Now we prove that the setwise stabilizer $W_\sigma$ for any cone is a parabolic subgroup.
\begin{lemma} \label{lem-parabolic}
    Let $\Sigma$ be a fan in $V$ which is invariant under the reflection action of $W$ on $V$, then the setwise stabilizer $W_\sigma=\{w \in W: w \cdot \sigma=\sigma\}$  of $\sigma$, is equal to the stabilizer $W_y$ of some point y in the relative interior of $\sigma$. As a corollary, $W_\sigma$ is a parabolic group of $W$.
\end{lemma}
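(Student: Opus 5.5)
We aim to show $W_\sigma=W_y$ for a well-chosen $y\in\operatorname{relint}(\sigma)$. By Proposition \ref{prop-unique} we may first replace $\sigma$ by a $W$-translate $w\sigma$ whose relative interior meets $D$. This is harmless because $W_{w\sigma}=wW_\sigma w^{-1}$ and $W_{wy}=wW_yw^{-1}$, and conjugates of parabolic subgroups are parabolic.

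Next we produce a $W_\sigma$-fixed point in the relative interior. Since $W_\sigma$ is finite and preserves the convex set $\operatorname{relint}(\sigma)$, the barycenter
\[
y=\frac{1}{|W_\sigma|}\sum_{w\in W_\sigma} w x, \qquad x\in \operatorname{relint}(\sigma),
\]
lies in $\operatorname{relint}(\sigma)$ and is fixed by every element of $W_\sigma$. This gives $W_\sigma\subseteq W_y$. For the reverse inclusion, let $w\in W_y$. Then $y=wy\in \operatorname{relint}(w\sigma)$, so the relative interiors of $\sigma$ and $w\sigma$ meet. Since $w\sigma\in\Sigma$ by invariance, the fan axiom (as already used in the proof of Lemma \ref{lem-intersect}) forces $w\sigma=\sigma$. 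Hence $W_y=W_\sigma$.

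For the corollary, we invoke the standard theorem (Steinberg; see \cite[Theorem 1.12]{Humphreys90}) that the stabilizer of any point $v\in V$ is generated by the reflections it contains. If $v\in D$, this stabilizer is the standard parabolic subgroup $W_J$ with $J=\{\alpha\in\Delta:\langle v,\alpha\rangle=0\}$. For a general $y$, write $y=u v$ with $v\in D$. Then $W_y=uW_vu^{-1}$, which is conjugate to a standard parabolic subgroup and therefore parabolic.

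There is no real obstacle here; the only substantive input is the cited point-stabilizer theorem. Note that the barycenter trick does not need $y\in D$. One could alternatively choose $y\in \operatorname{relint}(\sigma)\cap D$ directly, but then proving $W_\sigma\subseteq W_y$ would be harder, so the barycenter choice followed by conjugation is the cleaner route.
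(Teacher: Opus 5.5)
Your proof is correct and matches the paper's argument essentially step for step. The barycenter of a $W_\sigma$-orbit in $\operatorname{relint}(\sigma)$ gives $W_\sigma\subseteq W_y$; disjointness of the relative interiors of distinct cones gives $W_y\subseteq W_\sigma$; and Steinberg's point-stabilizer theorem, together with conjugating $y$ into $D$, gives parabolicity. The only difference is your opening reduction via Proposition~\ref{prop-unique}: it is harmless but never used, because your corollary step already conjugates an arbitrary $y$ into $D$.
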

\begin{proof}
    The proof of the first part is a standard averaging argument. Pick any point $x$ in the relative interior of $\sigma$. By averaging the orbit of $x$ under the action of $W_\sigma$, we can construct a new point $y$: 
    \begin{equation} \label{eq-averaging}
    y = \frac{1}{|W_\sigma|} \sum_{w \in W_\sigma} w(x).
    \end{equation}
    Because a cone is convex and $w\cdot\sigma= \sigma$ for all $w \in W_\sigma$, the point $y$ must also lie in $\operatorname{relint}(\sigma)$. Furthermore, by construction, $y$ is fixed by every element of $W_\sigma$. Therefore, $W_\sigma$ is a subgroup of the stabilizer of $y$, which we denote as $W_y$.
    
Conversely, suppose we have an element $w \in W_y$ (meaning $w(y) = y$). Because the fan $\Sigma$ is $W$-invariant, the element $w$ must map the cone $\sigma$ to some other cone $\sigma'$ within the fan. Since $w$ fixes $y$, the point $y$ must belong to both $\operatorname{relint}(\sigma)$ and $\operatorname{relint}(\sigma)'$, which implies $\sigma=\sigma'$
Therefore, any $w$ that fixes $y$ must also preserve $\sigma$ as a set. This gives us $W_y \subseteq W_\sigma$. This completes the proof of the first part. 

The second part follows because it is well-known that the stabilizer of any point (or more generally, the point stabilizer of any subset of $V$) is generated by the reflections it contains \cite[Theorem 1.12]{Humphreys90}. By acting the point into the fundamental domain $D$, the point stabilizer is conjugate to the stabilizer of a point in $D$, which is a standard parabolic group of $W$ \cite[Exercise 2 of Section 1.12]{Humphreys90}.
Therefore, the setwise stabilizer $W_\sigma$, which equals $W_y$, is a parabolic subgroup of $W$.
\end{proof}

Following ideas of Colin Crowley and Connor Simpson, we study a new fan that we called \emph{hybrid fan}, which encodes information of the $W$-action on $\Sigma$. The following definition is given by Colin Crowley and Connor Simpson\footnote{Personal communication from Connor Simpson in 4 Feb 2026.}.

\begin{definition}[Crowley--Simpson] \label{def-hybrid fan}
    Let $\Sigma$ be a fan in $V$ which is invariant under the reflection action of $W$ on $V$, the hybrid fan $\Sigma_W$ of $\Sigma$ with the reflection group $W$ is defined as follows: \\
(i) The rays of the hybrid fan $\Sigma_W$ consist of the rays $\rho_i$ of $\Sigma$ contained in the fundamental domain $D$ and rays $\tau_j$ spanned by negative simple roots $-\alpha_j$; \\
(ii) A collection of rays 
$\{\rho_i, \tau_j: i \in I, j \in J\}$ spans a cone, denoted by $\sigma_{I,J}$, 
in $\Sigma_W$ if and only if the associated cone $\sigma'_{I,J} := \operatorname{cone}(\{ w(\rho_i) : i \in I, w \in W_J \})$ is in $\Sigma$.
\end{definition}

In the above definition, since $\sigma'_{I,J}$ is spanned by the entire $W_J$-orbit of the rays $\{\rho_i\}_{i \in I}$, the cone $\sigma'_{I,J}$ is manifestly invariant under the action of the parabolic subgroup $W_J$. Note that for a fixed $I$, the associated cone $\sigma'_{I,J}$ may be the same cone $\sigma$ in the original fan $\Sigma$ for different $J \subset \Delta$, but each $W_J$ is always contained in the setwise stabilizer $W_{\sigma}$ of the cone $\sigma \in \Sigma$.

The following proposition justifies the Definition \ref{def-hybrid fan}, whose proof is given in \cite{Simpson2026more}.
\begin{proposition} \label{prop-justify}
    For any fan $\Sigma$ in $V$ which is invariant under the reflection action of $W$ on $V$, the hybrid fan $\Sigma_W$ is actually a fan in $V$.
\end{proposition}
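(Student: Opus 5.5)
We need to show: every $\sigma_{I,J}$ is a strongly convex polyhedral cone, every face of some $\sigma_{I,J}$ is again of the form $\sigma_{I',J'}$ and lies in $\Sigma_W$, and any two cones of $\Sigma_W$ meet in a common face. The plan is to work inside the fundamental domain $D$ and compare $\sigma_{I,J}$ with $\sigma'_{I,J}\cap D$. The guiding picture is that $\Sigma_W$ is obtained by cutting $\Sigma$ along $D$ and then ``extending'' across the walls of $D$ by the negative simple roots. Concretely, I would prove the identity
\[
\sigma_{I,J} \;=\; \bigl(\sigma'_{I,J}\cap D\bigr) + \operatorname{cone}(-\alpha_j : j\in J),
\]
or at least that $\sigma_{I,J}$ is determined by $\sigma'_{I,J}\cap D$ together with the face of $D$ cut out by $J$. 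This reduces the combinatorics of $\Sigma_W$ to that of the cones of $\Sigma$ meeting $D$, which by Proposition \ref{prop-unique} are orbit representatives.

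\textbf{Step 1: the generators are independent.} Fix $\sigma=\sigma'_{I,J}\in\Sigma$. By Lemma \ref{lem-parabolic}, $W_J\subseteq W_\sigma=W_y$ for some $y\in\operatorname{relint}(\sigma)$. The rays $\rho_i$, $i\in I$, lie in $D$ and span a face-like piece of $\sigma\cap D$, so they lie in a region on which the simple roots in $J$ are nonnegative. The goal is to show that the vectors $\{\rho_i\}_{i\in I}\cup\{-\alpha_j\}_{j\in J}$ are linearly independent (when $\Sigma$ is simplicial) and that their cone is pointed.

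For pointedness, choose a linear functional that is positive on each $\rho_i$ and on each $-\alpha_j$. A candidate is $\ell = \langle \cdot, y\rangle - \epsilon\langle\cdot,\varpi\rangle$, where $\varpi$ is a vector with $\langle\alpha_j,\varpi\rangle>0$ for $j\in J$. Such a $\varpi$ exists because $J$ is part of a basis, and $\epsilon$ is tuned so that positivity holds on all the generators.

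\textbf{Step 2: faces.} I would show that $\sigma_{I',J'}$ is a face of $\sigma_{I,J}$ whenever $I'\subseteq I$ and $J'\subseteq J$, provided $\sigma'_{I',J'}\in\Sigma$. To do this, produce a supporting functional by combining:
\begin{itemize}
\item a supporting functional of the face $\sigma'_{I',J'}$ of $\sigma'_{I,J}$, which may be averaged over $W_{J'}$;
\item the fundamental coweights dual to $J\setminus J'$, which vanish on the $-\alpha_{j}$ for $j\in J'$.
\end{itemize}

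\textbf{Step 3: intersections (the main obstacle).} I need to show that $\sigma_{I,J}\cap\sigma_{K,L}=\sigma_{I\cap K,J\cap L}$. The approach is to use $W_{J\cup L}$-folding. Any point $p$ in the intersection decomposes as $p=d-\sum_{j} c_j\alpha_j$, with $d\in\sigma'\cap D$ and $c_j\ge 0$. The decomposition of an element $u$ of the orbit cone as (dominant part) $-$ (nonnegative combination of simple roots) comes from the standard fact that $w(d)\in d-\operatorname{cone}(\Delta)$ for $d\in D$.

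The delicate point is uniqueness of this decomposition. The uniqueness argument should use the fan property of $\Sigma$ together with the uniqueness part of Proposition \ref{prop-unique}: two cones of $\Sigma$ whose relative interiors meet $D$ at the same point coincide. I expect the bulk of the work to be here. It is also where I would first test the claim on rank-one $W$ and then induct on $|J|$ via parabolic restriction to $W_J$.
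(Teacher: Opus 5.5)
\textbf{Genuine gap: Step 3 is the crux and is not proved.} Once each $\sigma_{I,J}$ is pointed and $\Sigma_W$ is closed under faces, the fan axiom is equivalent to the relative interiors of distinct cones $\sigma_{I,J}\neq\sigma_{K,L}$ being disjoint. You give no argument for this; you only name ``uniqueness of the decomposition $p=d-\sum_j c_j\alpha_j$'' as the delicate point. That uniqueness is false already in rank one. Let $W=\{1,s\}$ act on $\mathbb{R}^2$ by $s(x,y)=(-x,y)$, so $\alpha_1=e_1$ and $D=\{x\ge 0\}$, and let $\Sigma$ be the complete fan with rays $(\pm1,\pm1)$. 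For $I=\{(1,1)\}$ and $J=\{1\}$ we get $\sigma'_{I,J}=\operatorname{cone}((1,1),(-1,1))$. The point $p=(0,1)\in\operatorname{relint}\sigma_{I,J}$ equals $d-c\alpha_1$ with $d=(t,1)\in\sigma'_{I,J}\cap D$ and $c=t$, for every $t\in[0,1]$. So no argument can rest on that decomposition.

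What is needed is a canonical way to read off $\sigma'_{I,J}$, and then $J$, from a point $p\in\operatorname{relint}\sigma_{I,J}$. Only then can disjointness of relative interiors in $\Sigma$ and Proposition \ref{prop-unique} be applied. A natural candidate is the nearest-point projection $\pi_D$ from the proof of Proposition \ref{prop-completeness}; in the example it maps $\operatorname{relint}\sigma_{I,J}$ into $\operatorname{relint}\sigma'_{I,J}$. You would still have to prove such a statement in general. You would also have to separate different $J$ with the same $\sigma'_{I,J}$: a ray $\rho\subset D$ fixed by $W_K$ gives the distinct cones $\operatorname{cone}(\rho,-\alpha_j : j\in J)$ for every $J\subseteq K$, all with $\sigma'_{\{\rho\},J}=\rho$. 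Testing rank one and ``inducting on $|J|$'' is not a substitute, since two cones $\sigma_{I,J},\sigma_{K,L}$ involve different parabolics and different rays. The paper itself only cites \cite{Simpson2026more} for this proposition, so your proof has to stand on its own, and it stops exactly at the main step.

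\textbf{Two smaller repairs.} In Step 1, a point $y\in\operatorname{relint}\sigma'_{I,J}$ can make an obtuse angle with a ray of a wide cone. For example, $y=(-1,0.02)$ lies in $\operatorname{relint}\operatorname{cone}((1,0),(-1,0.01))$ but $\langle (1,0),y\rangle<0$, so $\langle\cdot,y\rangle$ need not be positive on the $\rho_i$. Instead, average over $W_J$ a vector in the interior of the dual cone of $\sigma'_{I,J}$. The result is $W_J$-fixed, hence orthogonal to $\alpha_j$ for $j\in J$, and your $\epsilon$-perturbation then works.

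In Step 2 you prove the direction that is not needed. The fan axiom requires that every face $F$ of $\sigma_{I,J}$ lies in $\Sigma_W$, with no simpliciality assumption, since the proposition is for arbitrary fans. This can be done as follows. Suppose $\ell\ge 0$ on $\sigma_{I,J}$ cuts out $F$. Put $I'=\{i\in I:\ell(\rho_i)=0\}$ and $J'=\{j\in J:\ell(\alpha_j)=0\}$, so $F=\sigma_{I',J'}$ as sets. Your displayed identity gives $\sigma'_{I,J}\subseteq\sigma_{I,J}$, and $w\rho_i\in\rho_i-\operatorname{cone}(\alpha_j:j\in J)$ for $w\in W_J$. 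Hence the face $\sigma'_{I,J}\cap\ker\ell$ is spanned by those $w\rho_i$ with $i\in I'$ and $\rho_i-w\rho_i$ supported on $J'$. For $\lambda\in D$, the support of $\lambda-w\lambda$ in $\Delta$ is the set of letters of the minimal representative of $wW_\lambda$. So these generators are exactly the $W_{J'}\rho_i$, $i\in I'$, and $\sigma'_{I,J}\cap\ker\ell=\sigma'_{I',J'}$. This is a face of a cone of $\Sigma$, hence lies in $\Sigma$, so $F\in\Sigma_W$.
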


The following proposition is actually the motivation of Definition \ref{def-hybrid fan}.
\begin{proposition} \label{prop-polytopal}
Suppose $P$ is a full-dimensional polytope in $V$ which is invariant under the reflection action of $W$ on $V$. Let $P/W:=P \cap D$ be the quotient polytope, which is a fundamental domain of $P$ under the action of $W$. Then the normal fan $\Sigma_{P/W}$ of the quotient polytope $P/W$ is the same as the hybrid fan $(\Sigma_P)_W$ of the normal fan $\Sigma_P$ of $P$ with the reflection group $W$.
\end{proposition}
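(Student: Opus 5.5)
We will show that $\Sigma_{P/W}$ and $(\Sigma_P)_W$ have the same maximal cones; both are complete fans, so this identifies all cones. The cone of $\Sigma_{P/W}$ over a vertex is its normal cone $N_{P/W}(q)=\{u\in V^*: \langle u,q\rangle\ge\langle u,p\rangle \ \forall p\in P/W\}$; we identify $V^*\cong V$ by the $W$-invariant inner product. The first step is to describe these cones. Since $P/W=P\cap D$ and $D=\{v:\langle v,\alpha_j\rangle\ge 0\}$, the inequalities defining $P/W$ are the facet inequalities of $P$ together with $\langle -\alpha_j, v\rangle\le 0$. Hence the normal cone at a face $F$ of $P/W$ is spanned by two kinds of vectors: the outer normals of facets of $P$ that contain $F$, and the vectors $-\alpha_j$ for those $j$ with $F\subset H_{\alpha_j}$. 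Here we use the standard description of normal cones of an intersection of polyhedra in general position; if general position fails, we use instead the exact formula $N_{P\cap D}(q)=N_P(q)+N_D(q)$, valid since $\operatorname{relint}P\cap \operatorname{int}D\neq\emptyset$.

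The second step treats a point $q$ of $P/W$. Let $J=\{j:\langle q,\alpha_j\rangle=0\}$; then $W_q=W_J$, and $N_D(q)=\operatorname{cone}(-\alpha_j: j\in J)$. For a ray $\rho$ of $\Sigma_P$ lying in $D$ and normal to a facet $F_\rho$ containing $q$, note that $w F_\rho$ also contains $q$ for every $w\in W_J$. Therefore the cone $\operatorname{cone}(w\rho: w\in W_J)\subseteq N_P(q)$, so it is a face of $N_P(q)$ that is $W_J$-invariant. Conversely, any ray of $N_P(q)$ is $W_J$-conjugate to a ray in $D$, by Proposition \ref{prop-unique} applied to the $W_J$-action on $\Sigma_P$. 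The rays of $N_P(q)$ lying in $D$ are exactly the $\rho_i$, $i\in I$. This shows $N_P(q)=\sigma'_{I,J}$, i.e.\ condition (ii) of Definition \ref{def-hybrid fan} holds. It also gives $N_P(q)+N_D(q)=\operatorname{cone}(\rho_i,-\alpha_j)$ modulo the rays of $\sigma'_{I,J}$ outside $D$. The key claim is that each such ray $w\rho_i$ with $w\in W_J$ already lies in $\operatorname{cone}(\rho_i,-\alpha_j: j\in J)$. This follows from the standard fact that $v-wv$ is a nonnegative combination of the simple roots in $J$ for $v\in D$ and $w\in W_J$ (Humphreys). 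Hence $N_{P/W}(q)=\sigma_{I,J}$.

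The third step is the converse: every cone $\sigma_{I,J}$ of $(\Sigma_P)_W$ arises in this way. Given $\sigma'_{I,J}\in\Sigma_P$, it is the normal cone of some face $G$ of $P$. Since it is $W_J$-invariant, $G$ is $W_J$-stable, so by averaging $G$ contains a $W_J$-fixed point. We then need to move this point into $D$ while staying in $G$ and keeping it fixed by $W_J$. We would take the point of $G$ that is $W_J$-fixed and as deep in $D$ as possible, arguing via Proposition \ref{prop-unique} and Lemma \ref{lem-parabolic} that $\operatorname{relint}$ of the normal cone meets $D$. A point $q$ of $G\cap D$ then has normal data containing $(I,J)$, so $\sigma_{I,J}$ is a face of $N_{P/W}(q)$.

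I expect the main obstacle to be this converse step, together with making sure that $J$ matches the exact set of walls through $q$ rather than a subset. The face-of argument in the second step should handle the case where $J$ is a subset, since faces of $N_{P/W}(q)$ correspond to subsets of the generating data.
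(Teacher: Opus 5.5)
\textbf{The gap is in Step 3.} It fails in two places.

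First, the existence of a point $q\in G\cap D\cap V^{W_J}$ is only gestured at. You say you would take a ``deepest'' $W_J$-fixed point and argue via Proposition~\ref{prop-unique} and Lemma~\ref{lem-parabolic}. But those results concern cones of $\Sigma_P$. Knowing that the relative interior of $\sigma'_{I,J}$ meets $D$ still has to be converted into a $W_J$-fixed point of the face $G$ of $P$ lying in $D$, and you do not say how. This is exactly the equivalence the paper imports from \cite[Proposition 2.4]{gong2026invariants}.

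Second, even granting such a $q$, you only get $I\subseteq I(q)$ and $J\subseteq J(q)$, hence $\sigma_{I,J}\subseteq N_{P/W}(q)$. Your claim that faces of $N_{P/W}(q)$ ``correspond to subsets of the generating data'' holds only for simplicial cones. $P$ is not assumed simple, so $N_{P/W}(q)$ can be non-simplicial, and then a subcone spanned by part of its generators is in general not a face (take two opposite generators of a cone over a square). The same slip occurs in Step 2 (``so it is a face of $N_P(q)$''). There it is harmless, because you go on to prove $N_P(q)=\sigma'_{I,J}$ directly.

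\textbf{You can drop Step 3 altogether.} Step 2 shows that \emph{every} cone of $\Sigma_{P/W}$, not only the maximal ones, lies in $(\Sigma_P)_W$, since each cone of $\Sigma_{P/W}$ is $N_{P/W}(q)$ for $q$ in the relative interior of a face of $P/W$. Now $(\Sigma_P)_W$ is a fan (Proposition~\ref{prop-justify}) and $\Sigma_{P/W}$ is complete, so this inclusion is already an equality. Indeed, given $\tau\in(\Sigma_P)_W$ and $x\in\operatorname{relint}\tau$, let $\sigma\in\Sigma_{P/W}$ be the cone with $x\in\operatorname{relint}\sigma$. Then $\sigma$ and $\tau$ are cones of the same fan whose relative interiors meet, so $\tau=\sigma\in\Sigma_{P/W}$.

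\textbf{One step inside Step 2 also needs repair.} Proposition~\ref{prop-unique} applied to $W_J$ only moves a ray of $N_P(q)$ into the $W_J$-chamber $\{v:\langle v,\alpha_j\rangle\ge 0,\ j\in J\}$, which is strictly larger than $D$. To land in $D$, use $q\in P$ and the $W$-invariance of the support function $h_P$. Every $\rho\in N_P(q)$ satisfies $\langle\rho,\alpha_k\rangle\ge 0$ for $k\notin J$. Otherwise $\langle\alpha_k,q\rangle>0$ gives $\langle s_k\rho,q\rangle=\langle\rho,q\rangle-2\langle\rho,\alpha_k\rangle\langle\alpha_k,q\rangle>\langle\rho,q\rangle=h_P(\rho)=h_P(s_k\rho)$, contradicting $q\in P$. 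Since $W_J$ preserves $N_P(q)$, the conjugate in the $W_J$-chamber therefore lies in $D$.

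With these two changes, your normal-cone computation ($N_{P\cap D}(q)=N_P(q)+N_D(q)$, together with $\rho_i-w\rho_i\in\operatorname{cone}(\alpha_j:j\in J)$) becomes a complete proof. It is more self-contained than the paper's facet-intersection argument, which cites \cite{gong2026invariants} for the key step. The price is that it relies on Proposition~\ref{prop-justify}.
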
 

\begin{proof}
Firstly we show the equality of the rays of $\Sigma_{P/W}$ and $(\Sigma_P)_W$.
The rays of a normal fan correspond to the outer normal vectors of the facets (codimension-1 faces) of the polytope. We must identify the facets of the quotient polytope $P/W = P \cap D$. 
The facets of $P \cap D$ arise from two distinct sources. The first class are the facets of $P$ restricted to $D$: these are the facets of the original $W$-invariant polytope $P$ that intersect the interior of the dominant chamber $D$. Because $\Sigma_P$ is the normal fan of $P$, it is not hard to see that the outer normals to these facets are precisely the rays of $\Sigma_P$ that lie inside $D$ (see for example, \cite[Lemma 2.2 and Proposition 2.4]{gong2026invariants}). That is, the rays $\rho_i$ in $(\Sigma_P)_W$. The second class of the facets of $P \cap D$ comes from facets of the dominant chamber $D$: the dominant chamber $D$ is cut out by the hyperplanes defined by the simple roots, characterized by the inequalities $\langle x, \alpha_j \rangle \ge 0$. The facets of $D$ that intersect $P$ form the remaining facets of $P \cap D$. The outer normal vector to a facet defined by $\langle x, \alpha_j \rangle \ge 0$ (pointing outward from $P \cap D$) is $-\alpha_j$. Since $P$ is full-dimensional and is invariant on the reflection action, $P$ intersects with all the reflection hyperplanes. So all negative simple roots span the rays $\tau_j$ in $(\Sigma_P)_W$. Therefore, the complete set of rays generating the normal fan $\Sigma_{P/W}$ is exactly the rays of the hybrid fan $(\Sigma_P)_W$.

Secondly, we show the equality of the cone structure of $\Sigma_{P/W}$ and $(\Sigma_P)_W$. In a normal fan, a collection of rays spans a cone if and only if the intersection of their corresponding facets forms a non-empty face of the polytope. Let $I$ be a subset of indices for the rays $\rho_i$, and $J$ be a subset of indices for the rays $\tau_j$. We want to show that $\{\rho_i, \tau_j : i \in I, j \in J\}$ spans a cone in $\Sigma_{P/W}$ if and only if $\{ w(\rho_i) : i \in I, w \in W_J \}$ spans a cone in $\Sigma_P$. Let $F_{P/W}$ be a potential face of $P/W$ defined by our chosen rays. It is given by the intersection: $$F_{P/W} = \left( \bigcap_{i \in I} F_i \right) \cap D \cap\left( \bigcap_{j \in J} H_j \right),$$ where $F_i$ is the facet of $P$ with outer normal $\rho_i$, and $H_j = \{x \in V : \langle x, \alpha_j \rangle = 0\}$ is the wall of $D$ corresponding to the simple root $\alpha_j$. By \cite[Proposition 1.15]{Humphreys90}, the intersection of the hyperplanes $\bigcap_{j \in J} H_j$ is exactly the fixed-point subspace $V^{W_J}$ of the standard parabolic subgroup $W_J$. Thus, $F_{P/W}$ consists of the points in $P \cap D$ that are fixed by $W_J$ and simultaneously lie on the facets $F_i$ for all $i \in I$. Because $P$ is $W$-invariant, the facets of $P$ are the union of the $W$-orbits of the facets $F_i$ with outer normal $\rho_i \in D$. If a point $x$ lies on $F_i$ and is fixed by $W_J$, then $x$ must also lie on the translated facet $w(F_i)$ for every $w \in W_J$. Now, it is not hard to see that (see for example \cite[Proposition 2.4]{gong2026invariants}) the existence of the non-empty face $F_{P/W}$ in the quotient polytope is equivalent to the existence of a non-empty face $F_P$ in the original polytope $P$, defined by:
    $$F_P = \bigcap_{i \in I, w \in W_J} w(F_i)$$
By the definition of the normal fan $\Sigma_P$, the intersection of facets $\bigcap_{i \in I, w \in W_J} w(F_i)$ forms a non-empty face of $P$ if and only if their corresponding outer normal vectors, which are exactly $\{ w(\rho_i) : i \in I, w \in W_J \}$, span a cone in $\Sigma_P$. This shows that cone condition of $\Sigma_{P/W}$ is exactly the same with that of the hybrid fan $(\Sigma_P)_W$. 

Therefore, the normal fan of the quotient polytope $\Sigma_{P/W}$ is exactly the hybrid fan $(\Sigma_P)_W$.
\end{proof}

The following proposition is the combinatorial translation of ``quotient of a compact toric variety by a finite reflection group is still compact'', if we believe Conjecture \ref{conj-quotient}.
\begin{proposition} \label{prop-completeness}
    For any fan $\Sigma$ in $V$ which is invariant under the reflection action of $W$ on $V$, the hybrid fan $\Sigma_W$ is complete if the original $W$-invariant fan $\Sigma$ is complete.
\end{proposition}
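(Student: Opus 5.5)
Because $D$ is a fundamental domain, $|\Sigma_W| = V$ follows once we show that every point $x$ of $D$ lies in a cone of $\Sigma_W$, and that every point of $V \setminus D$ lies in a cone of $\Sigma_W$ as well. The plan is to handle $D$ directly and then use the negative simple roots to reach the rest of $V$.

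\textbf{Points of $D$.} Let $x\in D$. By completeness of $\Sigma$ there is a cone $\sigma\in\Sigma$ with $x\in\operatorname{relint}(\sigma)$. Lemma \ref{lem-parabolic} gives $W_\sigma = W_x$, and since $x\in D$ this is the standard parabolic subgroup $W_J$, where $J=\{\alpha_j : \langle x,\alpha_j\rangle=0\}$. Every ray $\rho$ of $\sigma$ can be moved into $D$ by some $w\in W$. I would like this $w$ to lie in $W_J$, so that $\sigma$ is the $W_J$-orbit cone of $I=\{\text{rays of } \sigma \text{ that lie in } D\}$, i.e.\ $\sigma = \sigma'_{I,J}$.

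To get this, write $x=\sum c_\rho v_\rho$ with all $c_\rho>0$ and average over $W_J$; averaging does not move $x$. For a ray $\rho$ of $\sigma$, take the unique $W_J$-translate $\rho^*$ of $\rho$ lying in the fundamental chamber $D_J=\{v : \langle v,\alpha_j\rangle\ge0,\ j\in J\}$ of $W_J$. I then need to show that $\rho^*\in D$, i.e.\ that $\langle \rho^*,\alpha_k\rangle\ge 0$ also for $k\notin J$. Here I would use that $\langle x,\alpha_k\rangle>0$ for $k \notin J$, together with the fact that the cone $\sigma$ is $W_J$-stable and meets $D$ only in the face $\sigma\cap D$. \textbf{This is the step I expect to be the main obstacle.} The tool I would try first is Lemma \ref{lem-intersect}: a ray of $\sigma$ strictly on the negative side of some $H_{\alpha_k}$ with $k\notin J$ would produce a point of $\operatorname{relint}(\sigma)$ on $H_{\alpha_k}$, hence $s_{\alpha_k}\in W_\sigma=W_J$, which is a contradiction.

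Granting this, $\sigma=\sigma'_{I,J}$, so $\sigma_{I,J}=\operatorname{cone}(\rho_i,-\alpha_j)$ belongs to $\Sigma_W$. Moreover $x$ lies in $\operatorname{cone}(\rho_i : i\in I)\subseteq \sigma_{I,J}$: averaging $x$ over $W_J$ and projecting onto the rays in $D$ expresses $x$ as a nonnegative combination of the $\rho_i$ and their $W_J$-images, and because $x$ is $W_J$-fixed one can collapse orbits.

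\textbf{Points outside $D$.} Let $x\notin D$. Project $x$ onto $D$: let $y$ be the nearest point of $D$ to $x$. By the standard characterization of projection onto a polyhedral cone, $x-y=-\sum_{j\in J'} c_j\alpha_j$ with $c_j\ge0$, where $J'\subseteq\{j : \langle y,\alpha_j\rangle=0\}$. Apply the first part to $y$, obtaining $I$ and $J=\{j : \langle y,\alpha_j\rangle=0\}\supseteq J'$. Then $x=y+\sum c_j(-\alpha_j)$ lies in $\sigma_{I,J}$, which is a cone of $\Sigma_W$ by Proposition \ref{prop-justify}. Hence $|\Sigma_W|=V$.
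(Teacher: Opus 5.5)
Your route is the paper's: a nearest-point decomposition $v=v_D+v_N$ plus Lemma \ref{lem-intersect}. But the argument breaks in two places, both tied to the choice of $J$.

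First, Lemma \ref{lem-parabolic} does not give $W_\sigma=W_x$ for an arbitrary $x\in\operatorname{relint}(\sigma)\cap D$. It gives $W_\sigma=W_y$ only for the averaged point $y$; in general one has just $W_x\subseteq W_\sigma$. Take $W=\{1,s_\alpha\}$ on $\mathbb{R}^2$ with $\alpha=(1,0)$, so $D=\{(a,b):a\ge 0\}$, and the complete simplicial $W$-invariant fan with rays $(1,1),(-1,1),(0,-1)$. For $\sigma=\operatorname{cone}((1,1),(-1,1))$ and $x=(1,2)$ you get $J=\emptyset$ and $W_x=\{1\}$, but $W_\sigma=W$. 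The ray $(-1,1)$ has no $W_J$-translate in $D$, and your Lemma \ref{lem-intersect} argument only produces $s_\alpha\in W_\sigma$, which contradicts nothing.

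Second, even when $W_\sigma=W_x$, the claim $x\in\operatorname{cone}(\rho_i:i\in I)$ is false. For $x=(0,1)$ you have $J=\{\alpha\}$, $W_x=W_\sigma$, $I=\{(1,1)\}$ and indeed $\sigma=\sigma'_{I,J}$, yet $x\notin\operatorname{cone}((1,1))$. Averaging cannot rescue this: a $W_J$-orbit sum of rays is $W_J$-fixed but need not be a multiple of its dominant member, e.g. $(1,1)+(-1,1)=(0,2)$. In general $\sigma\cap D$ is neither a face of $\sigma$ nor spanned by the rays of $\sigma$ lying in $D$, so points of $D$ already need the negative simple roots: $(0,1)=(1,1)+(-\alpha)$.

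Your second part starts from $y\in\operatorname{cone}(\rho_i)$ with $J$ the walls through $y$, so it inherits both defects, and the conclusion itself can fail. Let $W=\{\pm1\}^2$ act on the first two coordinates of $\mathbb{R}^3$ (simple roots $e_1,e_2$), let $\Sigma$ be the central fan of the bipyramid with equator $(1,0,1),(-1,0,1),(0,0,-1)$ and apexes $(0,\pm1,0)$, and take $x=(1,-1,2)$. Then $y=(1,0,2)$, $J=\{\alpha_2\}$, $I=\{(1,0,1)\}$, and $x\notin\sigma_{I,J}=\operatorname{cone}((1,0,1),(0,-1,0))$.

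The missing idea is to index by the full stabilizer.

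\textbf{The stabilizer is standard parabolic.} If $\operatorname{relint}(\sigma)\cap D\neq\emptyset$, the averaged point $y$ lies in $D$. Indeed, if $uy\in D$, Proposition \ref{prop-unique} gives $u\sigma=\sigma$, so $u\in W_\sigma=W_y$ and $uy=y$. Hence $W_\sigma=W_y=W_K$ for some $K\subseteq\Delta$.

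\textbf{Your obstacle step then works with $K$.} Each ray of $\sigma$ has a $W_K$-translate $\rho^*$, again a ray of $\sigma$, with $\langle\rho^*,\alpha_k\rangle\ge 0$ for $k\in K$. Suppose $\langle\rho^*,\alpha_m\rangle<0$ for some $m\notin K$. The segment from a point of $\operatorname{relint}(\sigma)\cap D$ to a generator of $\rho^*$ then meets $H_{\alpha_m}$ inside $\operatorname{relint}(\sigma)$, so $s_{\alpha_m}\in W_K$. This is impossible, since $W_K$ fixes $\operatorname{span}\{\alpha_k:k\in K\}^\perp$ pointwise and $\alpha_m\notin\operatorname{span}\{\alpha_k:k\in K\}$. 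Thus $\sigma=\sigma'_{I,K}$.

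\textbf{Replace the orbit-collapsing step by the dominance property.} For $\rho\in D$ and $w\in W_K$, $\rho-w\rho\in\operatorname{cone}\{\alpha_k:k\in K\}$, by induction on length. Hence $\sigma\subseteq\sigma_{I,K}$.

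\textbf{Conclusion.} For $v=v_D+v_N$, the support $J$ of $v_N$ consists of walls through $v_D$. So $W_J\subseteq W_{v_D}\subseteq W_\sigma=W_K$, which forces $J\subseteq K$, and therefore $v\in\sigma_{I,K}\in\Sigma_W$.

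The paper's proof makes the same two assertions: $v_D\in\operatorname{cone}(\rho_i:i\in I)$, and $\sigma'_{I,J}=\sigma$ for $J$ the support of $v_N$. Both already fail for $v=(0,1)$ in the planar example, so the same repair is needed there.
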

\begin{proof}
To prove that the hybrid fan $\Sigma_W$ is complete, we must show that the union of all cones in $\Sigma_W$ covers the entire Euclidean space $V$. That is, for any arbitrary vector $v \in V$, there exists a cone in $\Sigma_W$ that contains $v$.

The fundamental domain $D$ is a closed convex polyhedral cone. By the Moreau decomposition theorem for closed convex cones \cite[Theorem 3.2.5]{hiriart2013convex}, any vector $v \in V$ can be uniquely decomposed as
$$v = v_D + v_N,$$
where $v_D \in D$ is the unique closest point to $v$ in $D$, $v_N$ belongs to the polar cone $D^\circ$ of $D$, defined as $\{y \in V : \langle y, x \rangle \leq 0 \text{ for all } x \in D\}$, and $\langle v_D, v_N \rangle = 0$.

Because $D$ is defined by the inequalities $\langle x, \alpha_k \rangle \geq 0$ for all $\alpha_k \in \Delta$, its polar cone $D^\circ$ is precisely the cone generated by the negative simple roots:
$$D^\circ = \operatorname{cone}(\{-\alpha_k : \alpha_k \text{ is a simple root}\}).$$

Therefore, we can write $v_N = \sum_{k} d_k (-\alpha_k)$ where $d_k \geq 0$. 
Let $J$ be the subset of indices where the coefficients are strictly positive, that is, 
$$v_N = \sum_{j \in J} d_j \tau_j, \quad d_j > 0.$$

Apply the orthogonality condition $\langle v_D, v_N \rangle = 0$, we have
$$\left\langle v_D, \sum_{j \in J} d_j (-\alpha_j) \right\rangle = \sum_{j \in J} -d_j \langle v_D, \alpha_j \rangle = 0.$$

Since $v_D \in D$, we know $\langle v_D, \alpha_j \rangle \geq 0$ for all $\alpha_j\in \Delta$. Because $d_j > 0$ for $j \in J$, the sum can only be zero if each individual term is zero:
$$\langle v_D, \alpha_j \rangle = 0 \quad \text{for all } j \in J.$$

This implies that $v_D$ lies on the reflecting hyperplane $H_{\alpha_j}$ for every $j \in J$. Consequently, $v_D$ is invariant under the simple reflections $s_{\alpha_j}$ for $j \in J$, meaning $v_D$ is fixed by the entire parabolic subgroup $W_J$:
$$w(v_D) = v_D \quad \text{for all } w \in W_J.$$

Since we assume the original $W$-invariant fan $\Sigma$ is complete, the point $v_D$ must lie in the relative interior of a unique cone $\sigma \in \Sigma$.

Because $v_D \in \operatorname{relint}(\sigma)$ and $v_D$ lies on the reflecting hyperplanes for $W_J$, by Lemma \ref{lem-intersect}, the entire cone $\sigma$ is invariant under $W_J$. That is, $w(\sigma) = \sigma$ for all $w \in W_J$.

Let $\{\rho_i : i \in I\}$ be the set of rays of $\sigma$ that are contained in the fundamental domain $D$. Because $v_D \in \sigma \cap D$, $v_D$ can be expressed as a non-negative linear combination of these rays:
$$v_D = \sum_{i \in I} c_i \rho_i, \quad c_i \geq 0.$$

Now consider the cone generated by the $W_J$-orbit of these rays:
$$\sigma'_{I,J} = \operatorname{cone}(\{ w(\rho_i) : i \in I, w \in W_J \}).$$

By definition, $\sigma'_{I,J} \subseteq \sigma$. However, because $v_D$ lies in the relative interior of $\sigma$ and $v_D \in \operatorname{cone}(\{\rho_i : i \in I\}) \subseteq \sigma_{I,J}$, the cone $\sigma'_{I,J}$ must have the same dimension as $\sigma$. Since both are convex cones and one is a subset of the other sharing a relative interior point, they must be equal:
$$\sigma'_{I,J} = \sigma.$$

Since $\sigma \in \Sigma$, by Definition \ref{def-hybrid fan} of the hybrid fan, 
$$\sigma_{I,J} = \operatorname{cone}(\{\rho_i : i \in I\} \cup \{\tau_j : j \in J\}) \in \Sigma_W.$$

We now have the decomposition for $v$:
$$v = v_D + v_N = \sum_{i \in I} c_i \rho_i + \sum_{j \in J} d_j \tau_j.$$

Since $c_i \geq 0$ and $d_j > 0$, $v$ is a non-negative linear combination of the generators of $\sigma_{I,J}$. 
Therefore, $v \in \sigma_{I,J}$. Since $v \in V$ can be chosen arbitrary, every vector in the Euclidean space is contained within at least one cone of the hybrid fan $\Sigma_W$. 
Thus, the hybrid fan $\Sigma_W$ is complete.
\end{proof}

The following proposition is the combinatorial translation of ``quotient of a toric orbifold by a finite reflection group is still an orbifold'', if we believe Conjecture \ref{conj-quotient}.
\begin{proposition} \label{prop-simplicial}
If the original $W$-invariant fan $\Sigma$ is simplicial, then the hybrid fan $\Sigma_W$ is also simplicial.
\end{proposition}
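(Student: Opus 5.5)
We must show: for every cone $\sigma_{I,J}\in\Sigma_W$, its generating rays $\{\rho_i\}_{i\in I}\cup\{-\alpha_j\}_{j\in J}$ are linearly independent. The plan is to pass to the cone $\sigma:=\sigma'_{I,J}\in\Sigma$, which is simplicial by hypothesis, and split $V$ into the part seen by $W_J$ and its orthogonal complement.

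\textbf{Step 1: how $W_J$ permutes the rays of $\sigma$.} The cone $\sigma$ is $W_J$-invariant, so $W_J$ permutes its rays. Since $\sigma$ is simplicial, its rays are exactly its extreme rays, and each generator $w(\rho_i)$ is a ray of $\sigma$. The rays of $\sigma$ are therefore exactly the orbits $W_J\rho_i$ for $i\in I$. Each $\rho_i$ lies in $D$, and the $\rho_i$ are distinct rays of $D$, so by Proposition \ref{prop-unique} applied to rays the orbits $W_J\rho_i$ are pairwise distinct.

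\textbf{Step 2: reduce to an injectivity statement.} Suppose
\[
\textstyle\sum_i c_i\rho_i-\sum_{j\in J} d_j\alpha_j=0 .
\]
Apply the averaging operator $\pi=\frac{1}{|W_J|}\sum_{w\in W_J}w$. This operator kills $\operatorname{span}\{\alpha_j: j\in J\}$, since that span is the orthogonal complement of $V^{W_J}$ and $\pi$ is the orthogonal projection onto $V^{W_J}$. We obtain $\sum_i c_i\,\pi(\rho_i)=0$. Now $\pi(\rho_i)$ is the barycenter of the orbit $W_J\rho_i$. These orbits partition the linearly independent set of rays of $\sigma$ into disjoint blocks, so the barycenters are linearly independent. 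Hence all $c_i=0$. Then $\sum_j d_j\alpha_j=0$, and the simple roots are linearly independent, so all $d_j=0$.

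\textbf{Step 3: fix the scaling of the generators.} Linear independence must be checked with well-defined generators. Choose unit ray generators, using a $W$-invariant inner product, so that $W_J$ permutes the generator vectors themselves and not merely the rays. Then the orbit-barycenter argument is legitimate, because $w(\rho_i)$ is literally another generator of $\sigma$. With this, $\dim\sigma_{I,J}=|I|+|J|$, and $\sigma_{I,J}$ is simplicial. Every face of $\sigma_{I,J}$ corresponds to a subset $(I',J')$, and its generators are a subset of a linearly independent set, so faces cause no additional issue.

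\textbf{Main obstacle.} The delicate point is Step 1: checking that distinct $i\in I$ give distinct $W_J$-orbits. Proposition \ref{prop-unique} says each $W$-orbit of rays meets $D$ in at most one ray. Since $W_J\subset W$, two rays $\rho_i\neq\rho_{i'}$ both in $D$ cannot lie in a common $W_J$-orbit. This is the expected proof route.
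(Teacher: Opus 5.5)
Your proof is correct, and it argues differently from the paper. The paper works inside $U=\operatorname{span}(\sigma)$. It splits $J=J_{\mathrm{triv}}\sqcup J_{\mathrm{act}}$ according to whether $s_j$ fixes the rays of $\sigma$ or swaps exactly one pair of them. It then argues that $\sigma$ has $|I|+|J_{\mathrm{act}}|$ rays, that $\{\rho_i\}_{i\in I}\cup\{-\alpha_j\}_{j\in J_{\mathrm{act}}}$ spans $U$, and that the roots $\alpha_j$ with $j\in J_{\mathrm{triv}}$ are orthogonal to $U$. So the $|I|+|J|$ hybrid generators span a space of dimension $|I|+|J|$. You replace this dimension count by a single projection. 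The averaging operator $\pi$ (the orthogonal projection onto $V^{W_J}$) annihilates every $\alpha_j$ with $j\in J$. It sends each $v_i$ to the barycenter of the block $W_Jv_i$, and these blocks are disjoint pieces of a basis of $U$. Your route is shorter and more robust. It needs neither the ``at most one transposition'' observation nor the ray count, which the paper supports only with the heuristic that each active reflection adds exactly one new ray. In exchange, the paper's route produces structural data---$\dim\sigma=|I|+|J_{\mathrm{act}}|$ and the splitting $W_J\cong W_{J_{\mathrm{triv}}}\times W_{J_{\mathrm{act}}}$---which it reuses in the proof of Theorem~\ref{thm-ref}. One small correction to your Step 1: each $w(\rho_i)$ is an extreme ray of $\sigma$ because of the fan axiom, not because $\sigma$ is simplicial. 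Indeed $w(\rho_i)$ is itself a cone of $\Sigma$ (by $W$-invariance) contained in $\sigma$, hence a face of $\sigma$. Your barycenter argument depends on this, since it needs the whole orbit $W_Jv_i$ to lie inside the linearly independent set of extreme-ray generators.
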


\begin{proof}
If $\Sigma$ is simplicial, every cone within it is generated by a linearly independent set of rays. 
Let $\sigma_{I, J}$ be a cone in the hybrid fan $\Sigma_W$ generated by the rays $\{\rho_i\}_{i \in I}$ contained in the dominant chamber $D$, alongside $\{\tau_j\}_{j \in J}$ spanned by the negative simple roots.

By the definition of the hybrid fan, this means that the corresponding cone $\sigma_{I, J}^{\prime}=\operatorname{cone}\left(\left\{w\left(\rho_i\right): i \in I, w \in W_J\right\}\right)$ is in the original fan $\Sigma$. Denote this cone as $\sigma$ for simplicity. Because $\Sigma$ is simplicial, the rays in the orbit $W_J \cdot \{\rho_i\}_{i \in I}$ must form a linearly independent set. 

When the parabolic subgroup $W_J$ acts on this linearly independent set of rays, by \eqref{eq-reflection}, each simple reflection $s_j$ (for $j \in J$) must fix all the rays $\{\rho_i\}_{i \in I}$ or act as a transposition on exactly one pair of rays. Let $J_{\text {triv }}$ be the subset of indices where $s_j$ acts trivially, and $J_{\text {act }}$ be the subset where $s_j$ acts via transposition. Note that $J=J_{\text {triv }} \cup J_{\text {act }}$. 

Consider the action of the parabolic subgroup $W_J$ on the subspace $U := \text{span}(\sigma)$.  The set $J_{\text {triv }}$ corresponds to the root $\alpha_j$ which is orthogonal to $U$, and therefore $\alpha_j \notin U$. For those $\alpha_j$ ($j \in J_{\text {act }}$), because $s_j$ swaps exactly one pair of rays $\rho, \rho'$ and fixes all others, we have 
$\rho' = s_j(\rho) = \rho - c \alpha_j$ for some constant $c \neq 0$, which implies $\alpha_j \in U$. This means that roots in $J_{\text {triv }}$ and roots in $J_{\text {act }}$ are orthogonal hence we have $W_J\cong W_{J_{\text{triv }}}\times W_{J_{\text {act}}}$.

Because the orbit $W_J \cdot \left\{\rho_i: i \in I\right\}$ is generated by starting with the rays $\left\{\rho_i: i \in I\right\}$ and repeatedly applying the transpositions from $J_{\text{act}}$, every index $j \in J_{\text{act}}$ introduces exactly one new linearly independent (since $\sigma$ is simplicial and simple roots are linearly independent) ray into the orbit. The indices in $J_{\text{triv}}$ add no new rays.
Therefore, the total number of rays in the orbit is exactly
$|I| + |J_{\text{act}}|$ and $\dim(U) = |I| + |J_{\text{act}}|$.

We now consider the span of the hybrid ray generators $H:=\{\rho_i\}_{i\in I} \cup \{-\alpha_j\}_{j \in J}$. 
We can split this into the active and trivial roots:
$$\text{span}(H) = \text{span} \left( \{\rho_i\}_{i\in I} \cup \{-\alpha_j\}_{j \in J_{\text{act}}} \right) \oplus \text{span} \left( \{-\alpha_j\}_{j \in J_{\text{triv}}} \right)$$
Recall that for every $j \in J_{\text{act}}$, the root $\alpha_j$ is contained in $U$. Because the orbit $W_J \cdot \left\{\rho_i: i \in I\right\}$ is constructed entirely from $\{\rho_i\}_{i\in I}$ and the active roots, we have an equality of vector spaces:
$$\text{span} \left( \{\rho_i\}_{i\in I} \cup \{-\alpha_j\}_{j \in J_{\text{act}}} \right) = U,$$
whose dimension is $|I| + |J_{\text{act}}|$.

Recall that for every $j \in J_{\text{triv}}$, the root $\alpha_j$ is orthogonal to $U$. Furthermore, the simple roots are themselves linearly independent. Therefore, adding the trivial roots increases the dimension of the span by exactly $|J_{\text{triv}}|$. Thus, the total dimension of the span of $H$ is
$$\dim(\text{span}(H)) = (|I| + |J_{\text{act}}|) + |J_{\text{triv}}| = |I| + |J|.$$

This implies that $H=\{\rho_i\}_{i\in I} \cup \{-\alpha_j\}_{j \in J}$ is linearly independent. Since this holds for any arbitrary cone $\sigma_{I, J}$, the hybrid fan $\Sigma_W$ is simplicial. 
\end{proof}

However, the converse of the above Proposition \ref{prop-simplicial} is false---the hybrid fan can be simplicial even if the original fan is non-simplicial.

\begin{example}
When a reflection group acts on a fan, the orbit of a set of rays $W_J \cdot \{\rho_i\}$ can easily contain more rays than the dimension of the space. The hybrid fan effectively ``compresses'' or ``folds'' this complex orbit into just the $|I| + |J|$ generators. Thus, $\Sigma_W$ can have an exactly correct number of linearly independent generators while representing a cone in $\Sigma$ with far too many rays. We can visualize this geometrically using quotient polytopes. Let the reflection group $W = \mathbb{Z}_2^3$ act on $\mathbb{R}^3$ by sign changes. The closed dominant chamber $D$ is the positive octant, and the negative simple roots are the negative coordinate vectors. Let $\Sigma$ be the normal fan of the regular octahedron.
Because all the vertices (the vertices on the coordinate axes) of the octahedron are meeting by 4 facets, its normal fan contains 3-dimensional cones generated by 4 rays. Therefore, $\Sigma$ is not simplicial. However, by Proposition \ref{prop-polytopal}, the hybrid fan $\Sigma_W$ is the normal fan of the quotient polytope $P/W$, which is the intersection of the octahedron with the dominant chamber $D$. The intersection of the regular octahedron with the positive octant is the standard 3-simplex (a tetrahedron). A tetrahedron is a simple polytope—exactly 3 faces meet at every vertex. Thus, its normal fan $\Sigma_W$ is simplicial. The hybrid fan resolves the non-simplicial nature of the original fan by folding the $W_J$-orbits out of the picture.
\end{example}

For a finite group $G$, recall that we have the complex representation ring $R(G)$, which is isomorphic to the ring of class function on $G$. If $N=\bigoplus N_i$ is a graded representation of $G$, we set
$$
\operatorname{R}_{N}(t)=\sum\left[N_i\right] t^i \in R(G)[t],
$$
where $\left[N_i\right]$ is the class of $N_i$ in $R(G)$. Given a representation $\operatorname{\psi}: G \rightarrow \operatorname{GL}(U)$, we regard $\operatorname{det}(tI-\operatorname{\psi})$ as a a $\mathbb{C}[t]$-valued class function on $G$ whose value at $g \in G$ is $\operatorname{det}(tI-\operatorname{\psi}(g))$. We need the following lemma.

\begin{lemma} \label{lem-ext}
Suppose $\operatorname{dim}(U)=d$, then we have
$$
\operatorname{det}(tI-\operatorname{\psi} )=\sum_{m \geq 0}^{d}(-1)^{d-m}\left[\bigwedge^{d-m} U\right] t^m \in R(G)[t].
$$
\end{lemma}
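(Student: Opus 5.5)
The plan is to compare both sides as class functions. Since $R(G)\otimes\mathbb{C}$ is identified with the ring of class functions, and characters determine classes, it suffices to check that the two sides of Lemma \ref{lem-ext} agree after evaluation at every $g\in G$, coefficient by coefficient in $t$. The class $[\bigwedge^{k}U]$ evaluates at $g$ to $\operatorname{Tr}(\psi(g)\mid \bigwedge^k U)$, so the claim reduces to the pointwise identity
$$
\det(tI-\psi(g))=\sum_{m=0}^{d}(-1)^{d-m}\operatorname{Tr}\bigl(\psi(g)\mid \textstyle\bigwedge^{d-m}U\bigr)\,t^m .
$$

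To prove this identity, fix $g$. Because $G$ is finite, $\psi(g)$ has finite order and is therefore diagonalizable over $\mathbb{C}$; after extending scalars if $U$ is real, pick an eigenbasis $u_1,\dots,u_d$ with eigenvalues $\lambda_1,\dots,\lambda_d$. The wedges $u_{i_1}\wedge\cdots\wedge u_{i_k}$ with $i_1<\cdots<i_k$ then form an eigenbasis of $\bigwedge^k U$, with eigenvalues $\lambda_{i_1}\cdots\lambda_{i_k}$. Consequently the trace on $\bigwedge^k U$ is the elementary symmetric polynomial $e_k(\lambda_1,\dots,\lambda_d)$.

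On the other side, expanding the product gives $\det(tI-\psi(g))=\prod_{i}(t-\lambda_i)=\sum_{k=0}^{d}(-1)^k e_k(\lambda)\,t^{d-k}$. Substituting $k=d-m$ turns this into exactly the right-hand side above. If one wants to avoid diagonalizability altogether, the same identity follows for any endomorphism from the standard expansion of the characteristic polynomial, whose coefficients are sums of principal minors; these equal the traces on exterior powers, or one can argue by density of diagonalizable matrices.

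None of these steps is a real obstacle. The only point needing care is bookkeeping: interpreting the equality in $R(G)[t]$ via characters, which requires complexifying $U$ if it is real, and checking the sign and indexing convention $(-1)^{d-m}$ attached to $t^m$.
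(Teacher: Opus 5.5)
Your proof is correct and follows the paper's argument: evaluate both sides at each $g\in G$, write $\det(tI-\psi(g))=\prod_{j}(t-\lambda_j)$, and identify the trace of $\psi(g)$ on $\bigwedge^{k}U$ with the elementary symmetric polynomial $e_k(\lambda_1,\dots,\lambda_d)$. Your remarks on diagonalizability, complexification and the reindexing $k=d-m$ only spell out details that the paper leaves implicit.
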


\begin{proof}
 For any $g \in G$, if $\operatorname{\psi}(g)$ has eigenvalues $\lambda_1, \ldots, \lambda_d$ on $U$, then $\operatorname{det}(tI-\operatorname{\psi}(g) )=\prod_{j=1}^{d}\left(t-\lambda_j\right)$. The conclusion follows from that the trace of $\operatorname{\psi}(g)$ on the exterior power $\bigwedge^{i} U$ is given by the $i$-th elementary symmetric polynomials in $\lambda_1, \ldots, \lambda_d$.
\end{proof}

We are now in a position to give the 

\begin{proof}[Proof of Theorem \ref{thm-ref}] 
We begin with the graded character \eqref{eq-general} of the Artinian reduction $\overline{\mathcal{A}(\Sigma)}$. The Poincaré polynomial of the $W$-invariants $\overline{\mathcal{A}(\Sigma)}^W$ is obtained by taking the inner product with the trivial representation $1_W$:
$$\mathrm{P}_{\overline{\mathcal{A}(\Sigma)}^W}(t) = \left\langle 1_W, \sum_{\sigma \in \Sigma} \frac{\left|W_{\sigma }\right|}{\left|W\right|} \operatorname{Ind}_{W_{\sigma}}^{W}\left(\operatorname{det}(tI-\psi_\sigma)\right) \right\rangle_W.$$

By Frobenius reciprocity, $\langle 1_W, \operatorname{Ind}_{W_\sigma}^W (\chi) \rangle_W = \langle \operatorname{Res}_{W_\sigma}^W (1_W), \chi \rangle_{W_\sigma} = \langle 1_{W_\sigma}, \chi \rangle_{W_\sigma}$ for any character $\chi$ of $W_\sigma$. Summing over the $W$-orbits $[\sigma] \in \Sigma/W$, we get \begin{equation} \label{eq-PP by inner pro}
\mathrm{P}_{\overline{\mathcal{A}(\Sigma)}^W}(t) = \sum_{[\sigma] \in \Sigma/W} \langle 1_{W_\sigma}, \operatorname{det}(tI - \psi_\sigma) \rangle_{W_\sigma}.
\end{equation}

By Proposition \ref{prop-unique}, let $\Gamma = \{ \sigma \in \Sigma : \operatorname{relint}(\sigma) \cap D \neq \emptyset \}$ serve as a complete set of orbit representatives for $\Sigma/W$. For a fixed representative $\sigma \in \Gamma$, let $x \in \operatorname{relint}(\sigma) \cap D$. By taking averaging of $x$ under $W_\sigma$ as in \eqref{eq-averaging}, we get a new point $y\in \operatorname{relint}(\sigma)$. As in the proof of Lemma \ref{lem-parabolic}, $W_\sigma=W_y$. Using a similar argument on the line segment connecting $x$ and $y$ as in the proof of the uniqueness in Proposition \ref{prop-unique}, one can see $y \in D$. By \cite[Section 1.12, Exercise 2]{Humphreys90}, $W_\sigma$ is a standard parabolic subgroup $W_K$ for some set of simple roots $K \subseteq \Delta$.

As a finite reflection group, $W_K$ decomposes the Euclidean space $V$ orthogonally as $V = V_{W_K} \oplus V^{W_K}$, where $V_{W_K}$ is its standard reflection representation (of dimension $|K|$) and $V^{W_K}$ is the subspace fixed pointwise by $W_K$.

Because $W_K$ stabilizes $\sigma$ setwise, the linear span $V_\sigma$ is a $W_K$-invariant submodule of $V$. Since $V_{W_K}$ and $V^{W_K}$ have no isomorphic simple submodules, by complete reducibility, $V_\sigma$ decomposes as
$$V_\sigma = (V_\sigma \cap V_{W_K}) \oplus (V_\sigma \cap V^{W_K}).$$

Since $\sigma$ is a simplicial cone, the $W_K$-action on the rays of $\sigma$ must preserve their linear independence. As in the proof of Proposition \ref{prop-simplicial}, let $J_0 \subseteq K$ be the subset of simple roots corresponding to those simple reflections swapping exactly one pair of rays of $\sigma$. Then $\operatorname{dim}\left(V_\sigma \cap V_{W_K}\right)=\left|J_0\right|$. The remaining simple roots $K^{\prime}=K \backslash J_0$ generated a group $W_{K^{\prime}}$ acting trivially on $V_\sigma$. The group $W_K$ decompose accordingly to a direct product $W_K\cong W_{K'}\times W_{J_0}$.

This allows us to decompose the quotient $U = V/V_\sigma$:
$$V/V_\sigma \cong \frac{V_{W_K}}{V_\sigma \cap V_{W_K}} \oplus \frac{V^{W_K}}{V_\sigma \cap V^{W_K}} =: V_{refl} \oplus V_{triv},$$
It is not hard to see that the reflection part $V_{refl}$ is isomorphic to the standard reflection representation when viewed as a representation of the parabolic subgroup $W_{K'}$, whose dimension (denoted by $r$) is $|K| - |J_0|$. The trivial part $V_{triv}$ is a purely trivial representation of $W_K$, whose dimension (denoted by $k$) is $\dim V^{W_K} - \dim(V_\sigma \cap V^{W_K})$. 

Let $I$ be the indices of the rays of $\sigma$ that lie in $D$. Because $\sigma$ is generated by these $|I|$ rays along with the $|J_0|$ independent reflections, we have $\dim \sigma = |I| + |J_0|$.
We can now precisely calculate $k$. Since $d = |K| + \dim V^{W_K}$, we have
$$k = (d - |K|) - (\dim \sigma - |J_0|) = (d - |K|) - (|I| + |J_0| - |J_0|) = d - |I| - |K|.$$

Since the quotient $U \cong V_{refl} \oplus V_{triv}$, its exterior algebra decomposes as
\begin{equation}
    \bigwedge^m U \cong \bigoplus_{a+b=m} \left( \bigwedge^a V_{refl} \otimes \bigwedge^b V_{triv} \right).
\end{equation}
When taking $W_K$-invariants, the exterior powers of a reflection representation $\left( \bigwedge^a V_{refl} \right)$ of ${W_{K'}}$ are zero for $a > 0$ and $1$-dimensional for $a = 0$, see for example, \cite[Exercise 3(a), p.127]{Bourbaki-Lie456}. Therefore, apply Lemma \ref{lem-ext} to evaluate the inner product for our representative $\sigma$,  only the trivial part survives:
\begin{equation}
    \begin{aligned}
\langle 1_{W_K}, \operatorname{det}(tI - \psi_\sigma) \rangle_{W_K} =& \langle 1_{W_K}, \sum_{m= 0}^{r+k}(-1)^{m}\left[\bigwedge^{m} U\right] t^{r+k-m} \rangle_{W_K} \\=&\sum_{m=0}^k (-1)^m \dim\left( \bigwedge^m V_{triv} \right) t^{r + k - m}\\     
=& \sum_{m=0}^k (-1)^m \binom{k}{m} t^{r+k-m} = t^r (t-1)^k.
    \end{aligned}
\end{equation}

Substituting $r = |K| - |J_0|$ and $k = d - |I| - |K|$, the contribution of the orbit $[\sigma]$ to the Poincar\'e polynomial of the invariant part $\overline{\mathcal{A}(\Sigma)}^W$ is given by 
\begin{equation} \label{eq-contrib of an orbit}
   t^{|K| - |J_0|} (t-1)^{d - |I| - |K|}.  
\end{equation}

By Proposition \ref{prop-completeness} and Proposition \ref{prop-simplicial}, the hybrid fan $\Sigma_W$ is a complete simplicial fan. By \eqref{eq-h-poly}, the $h$-polynomial of the hybrid fan $\Sigma_W$ is
$$h_{\Sigma_W}(t) = \sum_{\sigma_{I,J} \in \Sigma_W} (t-1)^{d - (|I| + |J|)}.$$
We group this sum over hybrid cone $\sigma_{I,J}$ according to its associated cone $\sigma'_{I,J} \in \Sigma$. 

For our fixed representative $\sigma \in \Gamma$ (which is built from the rays in $D$ with index set $I$ extended by the non-trivial action of $W_{J_0}$), we have $\sigma'_{I, J_0} = \sigma$. 
Which other subsets $J\subset \Delta$ yield $\sigma'_{I,J} = \sigma$? Because $J_0$ consists independent simple reflections, any $J\subset \Delta$ yielding $\sigma'_{I,J} = \sigma$ must contains $J_0$.
Any subset $J$ satisfying $J_0 \subseteq J \subseteq K$ will add simple reflections to $J_0$ from $K' = K \setminus J_0$. Recall that $W_{K'}$ acts trivially on $V_\sigma$, meaning these extra reflections do not generate any new rays. Since $W_{K}$ is the setwise stabilizer of $\sigma$, any $J\subset \Delta$ yielding $\sigma'_{I,J} = \sigma$ should be a subset of $K$. Thus, $\sigma'_{I,J} = \sigma$ if and only if $J_0 \subseteq J \subseteq K$.

Therefore, the total contribution to $h_{\Sigma_W}(t)$ from the hybrid cones $\sigma_{I,J}$ associated to the cone $\sigma\in \Sigma$ is
$$\sum_{J_0 \subseteq J \subseteq K} (t-1)^{d - (|I| + |J|)}.$$
Let $m = |J| - |J_0|$. As $J$ ranges from $J_0$ to $K$, $m$ ranges from $0$ to $|K| - |J_0| (= r)$. Then the above sum becomes
$$\sum_{m=0}^r \binom{r}{m} (t-1)^{d - |I| - (|J_0| + m)} = (t-1)^{d - |I| - |J_0| - r} \sum_{m=0}^r \binom{r}{m} (t-1)^{r-m}.$$
Notice that $d - |I| - |J_0| - r = d - |I| - |J_0| - (|K| - |J_0|) = d - |I| - |K|$. By the binomial theorem, the sum simplifies to
$$(t-1)^{d - |I| - |K|} \big( (t-1) + 1 \big)^r = t^{|K| - |J_0|} (t-1)^{d - |I| - |K|}.$$

This perfectly matches the value \eqref{eq-contrib of an orbit} derived from the graded character formula \eqref{eq-general} for the orbit $[\sigma]$. Summing over all orbits in $\Sigma/W$ establishes the equality $\mathrm{P}_{\overline{\mathcal{A}(\Sigma)}^W}(t) = h_{\Sigma_W}(t)$.
\end{proof}

\section{Questions and future directions} \label{sec-ques}
We end the paper by presenting some open problems and directions for future research.

As explained in the introduction, Theorem \ref{thm-ref} gives an evidence of the existence of an isomorphism on the ``cohomology'' level, we leave the following problem to the interested reader.

\begin{problem}
For a finite reflection group $W$ acting linearly on $V$ preserving a complete simplicial fan $\Sigma$, construct an explicit graded ring isomorphism from the Artinian reduction of the Stanley--Reisner ring $\overline{\mathcal{A}(\Sigma_W)}$ of the hybrid fan to the $W$-invariants $\overline{\mathcal{A}(\Sigma)}^W$ of the Artinian reduction of the Stanley--Reisner ring of the original fan $\Sigma$.
\end{problem}

The desired isomorphism in the above problem is constructed for the normal fan of a weight polytope (which is always simple) associated with a regular weight under the action of the corresponding finite reflection group $W$ in \cite{gui2025weyl} and generalized to any simple non-degenerate $W$-symmetric polytopes in \cite{gong2026invariants}. Here, the assumption of being non-degenerate is equivalent to the assumption that the $W$-action on the normal fan is proper, see \cite[Lemma 2.2]{gong2026invariants}. We expect that the definition of the hybrid fan can handle the difficulty of the action being non-proper. 

Actually, we conjecture that there exists an isomorphism on the variety level.

\begin{conjecture} \label{conj-quotient}
  Let $W \subset \mathrm{GL}(V)$ be a finite reflection group preserving a lattice $M \subset V$ and an $M$-rational complete simplicial fan $\Sigma \subset V$. Then the quotient $X_\Sigma / W$ of the  associated toric variety $X_\Sigma$ under the induced action of $W$ is isomorphic to the toric variety $X_{\Sigma_W}$ associated with the hybrid fan $\Sigma_W$.
\end{conjecture}

As mentioned in the introduction, by virtue of Proposition \ref{prop-polytopal}, Crowley--Gong--Simpson's result \cite[Corollary 1.3]{crowley2024toric} show that the above Conjecture \ref{conj-quotient} is true for (possibly non-simplicial) polytopal (hence complete) fans. Besides, their result \cite[Theorem 1.2]{crowley2024toric} implies the above conjecture is also true for the fan $[\sigma]$ (which is non-complete) consist of the faces of a single strongly convex $M$-rational polyhedral cone $\sigma$ (by taking the saturated semigroup $S$ in \cite[Theorem 1.2']{crowley2024toric} to be the intersection of the dual cone $\sigma^\vee$ with $M$). So it is natural to ask the following question.\footnote{The author is informed that Conjecture \ref{conj-quotient} and 
Question \ref{question-quotient} will be addressed in forthcoming work of Connor Simpson.}

\begin{question} \label{question-quotient}
    Is the above Conjecture \ref{conj-quotient} true for any $W$-invariant $M$-rational fan? That is, is the quotient of any normal toric variety by a finite reflection group still toric, and isomorphic to the toric variety associated to the hybrid fan?
\end{question}

As mentioned in the introduction, the Betti numbers and the usual cohomology of a general toric variety are not invariants of the combinatorics of the associated fan in general \cite{mcconnell1989rational}. A better substitute of the usual cohomology for a general toric variety is the intersection cohomology of Goresky--MacPherson\cite{goresky1980intersection,goresky1983intersection}. Stanley\cite{stanley1987generalized} introduced the toric h-polynomial for any convex polytope $P$, which generalize the h-polynomial defined for simplicial polytopes. Actually, Stanley defined this polynomial for any \emph{Eulerian poset}, which including the face poset $\mathcal{F}(\Sigma)$ of any fan $\Sigma$. It is known that Stanley's toric h-polynomial compute the intersection cohomology Poincar\'e polynomial of the associated toric variety when $P$ is rational \cite{denef1991weights,fieseler1991rational}. Barthel--Brasselet--Fieseler--Kaup\cite{barthel2002combinatorial}, Bressler--Lunts\cite{bressler2003intersection}, Karu\cite{karu2004hard},and  Braden\cite{braden2006remarks} developed a theory of combinatorial intersection cohomology of fans (possibly non-rational, which do not define a toric variety). A beautiful result of Barthel--Brasselet--Fieseler--Kaup implies that for any complete fan $\Sigma$, the Poincar\'e polynomial of the combinatorial intersection cohomology of $\Sigma$ is given by Stanley's toric h-polynomial of $\mathcal{F}(\Sigma)$, see \cite[Theorem 3.2]{braden2006remarks}. Consider finite group actions on fans, we have the following 

\begin{question} \label{ques--GCF for CIH}
    Can one derive a graded character formula for the action of a finite group $G$ on the combinatorial intersection group of a complete fan $\Sigma$ in $V$, which is induced by a linear action of $G$ on $V$ preserving $\Sigma$.
\end{question}

The approach of Dolgachev--Lunts\cite{dolgachev1994character} for the character formula of a Weyl group acting on the cohomology of the toric variety associated with Weyl chambers might be useful to the above question. We expect that the above formula should be given by an equivariant version of Stanley's toric h-polynomial, the \emph{equivariant Kazhdan--Lusztig--Stanley polynomial} introduced by Proudfoot \cite{proudfoot2021equivariant}, with the \emph{equivariant p-kernel} given by Stapledon in \cite[Example 1.7]{stapledon2025subdivisions} for the Eulerian poset $\mathcal{F}(\Sigma)$.  

If one obtain a graded character formula for Question \ref{ques--GCF for CIH}, one may wonder whether a similar statement in Theorem \ref{thm-ref} is true for any complete fan $\Sigma$ by replacing $\overline{\mathcal{A}(\Sigma)}$ to the combinatorial intersection cohomology of $\Sigma$, in the spirit of Conjecture \ref{conj-quotient}. However, even if Conjecture \ref{conj-quotient} holds for any complete fans, we do not expect such a simple ``$W$-invariant of the combinatorial intersection cohomology of $\Sigma$ equals the combinatorial intersection cohomology of the hybrid fan $\Sigma_W$" holds. Because even when the involved fans are
rational, computing the intersection cohomology of the quotient need to use the \emph{Decompostion theorem} of Beilinson--Bernstein--Deligne--Gabber\cite{beilinson2018faisceaux} applying to the quotient map to analyze the possible contributions from the singularities of the quotient. We end with 

\begin{problem}
  For a finite reflection group $W$ acting linearly on $V$ preserving a complete fan $\Sigma$, derive a formula which relates  the $W$-invariants of the combinatorial intersection cohomology of $\Sigma$ and the combinatorial intersection cohomology of the hybrid fan $\Sigma_W$.
\end{problem}
\bibliographystyle{amsplain}
\bibliography{toric-symm}

\end{document}